\theoremstyle{plain}
\begin{document}

\def\a{\alpha}
 \def\b{\beta}
 \def\e{\epsilon}
 \def\d{\delta}
  \def\D{\Delta}
 \def\c{\chi}
 \def\k{\kappa}
 \def\g{\gamma}
 \def\Ind{\mathrm{Ind}}
 \def\t{\tau}
\def\ti{\tilde}
 \def\N{\mathbb N}
 \def\Q{\mathbb Q}
 \def\Z{\mathbb Z}
 \def\C{\mathbb C}
 \def\F{\mathbb F}
 \def\ovF{\overline\F}
 \def\bfN{\mathbf N}
 \def\cG{\mathcal G}
 \def\cT{\mathcal T}
 \def\cX{\mathcal X}
 \def\cY{\mathcal Y}
 \def\cC{\mathcal C}
 \def\cH{\mathcal H}
 \def\cZ{\mathcal Z}
 \def\cO{\mathcal O}
 \def\cW{\mathcal W}
 \def\cL{\mathcal L}
 \def\bfC{\mathbf C}
 \def\bfZ{\mathbf Z}
 \def\bfO{\mathbf O}
 \def\G{\Gamma}
 \def\BS{\bar{S}}
 \def\bO{\boldsymbol{\Omega}}
 \def\bgo{\boldsymbol{\omega}}
 \def\go{\rightarrow}
 \def\do{\downarrow}
 \def\ra{\rangle}
 \def\la{\langle}
 \def\fix{{\rm fix}}
 \def\ind{{\rm ind}}
 \def\rfix{{\rm rfix}}
 \def\diam{{\rm diam}}
 \def\uni{{\rm uni}}
 \def\diag{{\rm diag}}
 \def\Irr{{\rm Irr}}
 \def\Syl{{\rm Syl}}
 \def\Out{{\rm Out}}
 \def\Tr{{\rm Tr}}
 \def\M{{\cal M}}
 \def\CL{{\mathcal C}}
\def\td{\tilde\delta}
\def\tx{\tilde\xi}
\def\DC{D^\circ}
\def\ext{{\rm Ext}}
\def\res{{\rm Res}}
\def\Ker{{\rm Ker}}
\def\hom{{\rm Hom}}
\def\End{{\rm End}}
 \def\rank{{\rm rank}}
 \def\soc{{\rm soc}}
 \def\Cl{{\rm Cl}}
 \def\A{{\sf A}}
 \def\sP{{\sf P}}
 \def\sQ{{\sf Q}}
 \def\SSS{{\sf S}}
  \def\SQ{{\SSS^2}}
 \def\St{{\sf {St}}}
 \def\p{\ell}
 \def\ps{\ell^*}
 \def\SC{{\rm sc}}
 \def\supp{{\sf{supp}}}
  \def\cR{{\mathcal R}}
\newcommand{\tw}[1]{{}^#1}
\newcommand{\dl}{\mathfrak{d}}

\def\tr{{\rm tr}}
 \def\Sym{{\rm Sym}}
 \def\PSL{{\rm PSL}}
 \def\SL{{\rm SL}}
 \def\Sp{{\rm Sp}}
 \def\GL{{\rm GL}}
 \def\SU{{\rm SU}}
 \def\U{{\rm U}}
  \def\L{{\rm L}}
 \def\SO{{\rm SO}}
 \def\PO{{\rm P}\Omega}
 \def\Spin{{\rm Spin}}
 \def\PSp{{\rm PSp}}
 \def\PSU{{\rm PSU}}
 \def\PGL{{\rm PGL}}
 \def\PGU{{\rm PGU}}
 \def\Iso{{\rm Iso}}
 \def\Stab{{\rm Stab}}
 \def\GO{{\rm GO}}
 \def\Ext{{\rm Ext}}
 \def\E{{\cal E}}
 \def\l{\lambda}
 \def\ve{\varepsilon}
 \def\Lie{\rm Lie}
 \def\s{\sigma}
 \def\O{\Omega}
 \def\o{\omega}
 \def\ot{\otimes}
 \def\op{\oplus}
 \def\oc{\overline{\chi}}
\def\bZ{\mathbf{Z}}
\def\da{\downarrow}
 \def\pf{\noindent {\bf Proof.$\;$ }}
 \def\Proof{{\it Proof. }$\;\;$}
 \def\no{\noindent}
\def\skipa{\vspace{-1.5mm} & \vspace{-1.5mm} & \vspace{-1.5mm}\\}
\newcommand{\ta}{\hspace{0.5mm}^{2}\hspace*{-0.2mm}} 
\def\hal{\unskip\nobreak\hfil\penalty50\hskip10pt\hbox{}\nobreak
 \hfill\vrule height 5pt width 6pt depth 1pt\par\vskip 2mm}

 \renewcommand{\thefootnote}{}

\newtheorem{theorem}{Theorem}
 \newtheorem{thm}{Theorem}
 \newtheorem{prop}[thm]{Proposition}
 \newtheorem{conj}[thm]{Conjecture}
 \newtheorem{question}[thm]{Question}
 \newtheorem{lem}[thm]{Lemma}
 \newtheorem{lemma}[thm]{Lemma}
 \newtheorem{defn}[thm]{Definition}
 \newtheorem{cor}[thm]{Corollary}
 \newtheorem{coroll}[theorem]{Corollary}
\newtheorem*{corB}{Corollary}
 \newtheorem{rem}[thm]{Remark}
 \newtheorem{exa}[thm]{Example}
 \newtheorem{cla}[thm]{Claim}

\parskip 1mm

\title{Unitary $t$-Groups}

\author[E. Bannai]{Eiichi Bannai}
\address{E. Bannai, Professor Emeritus, Kyushu University, Fukuoka 819-0395, Japan}
\email{bannai@math.kyushu-u.ac.jp}

\author[G. Navarro]{Gabriel Navarro}
\address{G. Navarro, Departament of Mathematics, Universitat de Val\`encia,
 Dr. Moliner 50, 46100 Burjassot, Spain.}
\email{gabriel.navarro@uv.es} 
 
\author[N. Rizo]{Noelia Rizo}
\address{N. Rizo, Departament of Mathematics, Universitat de Val\`encia,
 Dr. Moliner 50, 46100 Burjassot, Spain.}
\email{noelia.rizo@uv.es} 
 
\author[P.H. Tiep]{Pham Huu Tiep}
\address{P. H. Tiep, Department of Mathematics, Rutgers University, Piscataway, NJ 08854, USA}
\email{tiep@math.rutgers.edu}
 
\date{\today}

\subjclass[2010]{ 05B30, 20C15, 81P45}

\thanks{The research of the second and third authors is partially supported by the
 Spanish Ministerio de Educaci\'on y Ciencia proyecto MTM2016-76196-P and Prometeo
 Generalitat Valenciana.  
The fourth author gratefully acknowledges the support of the NSF (grant DMS-1840702).
The paper is partially based upon work supported by the NSF under grant DMS-1440140 while 
the second, third, and fourth authors were in residence at MSRI (Berkeley, CA), during the Spring 2018
semester. It is a pleasure to thank the Institute for the hospitality and support.}

\begin{abstract}
Relying on the main results of \cite{GT}, we classify all unitary $t$-groups for $t \geq 2$ in any dimension 
$d \geq 2$. We also show that there is essentially a unique unitary $4$-group, which is also a unitary $5$-group, but 
not a unitary $t$-group for any $t \geq 6$.
\end{abstract}

\maketitle


Unitary $t$-designs  have recently attracted a lot of interest in quantum information theory.
The concept of unitary $t$-design was first conceived in physics 
community as a finite set that approximates the unitary group 
$\U_d(\C)$, like any other design concept. It seems that works of 
Gross--Audenaert--Eisert \cite{GAE} and Scott \cite{Sc} marked the start 
of the research on unitary $t$-designs. Roy--Scott \cite{RS} gives a 
comprehensive study of unitary $t$-designs from a mathematical 
viewpoint. 

It is known that unitary $t$-designs in $\U_d(\C)$ always exist for 
any $t$ and $d$, but explicit constructions are not so easy in general. 
A special interesting case is the case where a unitary $t$-design 
itself forms a {\it group}. Such a finite group in $\U_d(\C)$ is called a 
{\it unitary $t$-group}. Some examples of unitary 5-groups are known 
in $\U_2(\C)$. For $d\geq 3$, some unitary 3-groups have been known 
in $\U_d(\C)$. But no example of unitary 4-groups in dimensions $d \geq 3$ was known. 
It seems that the difficulty of finding 4-groups in $\U_d(\C)$ for 
$d\geq 3$ has been noticed by many researchers (see e.g. Section 1.2 
of \cite{ZKGG}). The purpose of this paper is to clarify this situation. 
Namely, we point out that this problem in dimensions $\geq 4$ is essentially solved 
in the context of finite group theory by Guralnick--Tiep \cite{GT}. We also 
show that the classification of unitary $2$-groups in $\U_d(\C)$ for $d\geq 5$ is 
derived from \cite{GT} as well. Building on this, we provide a complete description of 
unitary $t$-groups in $\U_d(\C)$ for all $t,d \geq 2$.

We now recall the notion of unitary $t$-groups, following \cite[Corollary 8]{RS}.
Let $V = \C^d$ be endowed with standard Hermitian form and let $\cH = \U(V) = \U_d(\C)$ denote the corresponding unitary group.
Then a finite subgroup $G < \cH$ is called a 
{\it unitary $t$-group}
for some integer $t \geq 1$, if 
\begin{equation}\label{unit1}
  \frac{1}{|G|}\sum_{g \in G}|\tr(g)|^{2t} = \int_{X \in \cH}|\tr(X)|^{2t}dX.
\end{equation}
Note that the right-hand-side in \eqref{unit1} is exactly the {\it $2t$-moment} $M_{2t}(\cH,V)$ as defined in \cite{GT},
whereas the left-hand-side is the $2t$-moment $M_{2t}(G,V)$.  Recall, see e.g. \cite[\S26.1]{FH}, 
that the complex irreducible representations of the real Lie algebra $\mathfrak{su}_d$ 
and the complex Lie algebra $\mathfrak{sl}_d$ are the same. It follows that $M_{2t}(\cH,V) = M_{2t}(\cG,V)$ for 
$\cG = \GL(V)$.
Given these basic observations, we can recast the main results of \cite{GT} in the finite setting as follows. 

\begin{thm}\label{gt-main1}
Let $V = \C^{d}$ with $d \geq 5$ and $\cG = \GL(V)$. Assume that $G < \cG$ is a 
finite subgroup. Then $M_{8}(G,V) > M_{8}(\cG,V)$. In particular, if $d \geq 5$ and $t \geq 4$, then there does not exist any 
unitary $t$-group in $\U_d(\C)$.
\end{thm}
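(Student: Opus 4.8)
The plan is to translate the moment equality into a statement about the endomorphism algebra of a tensor power and then invoke the classification in \cite{GT}. First I would record the identity $M_8(G,V) = \dim_\C \End_G(V^{\otimes 4})$, valid for any finite $G < \cG$: writing $\chi$ for the character of $V|_G$ one has $|\tr(g)|^8 = (\chi\overline{\chi})^4(g)$, so $M_8(G,V) = \langle \chi^4, \chi^4\rangle_G = \dim_\C\End_G(V^{\otimes 4})$. The same computation for $\cG = \GL(V)$, combined with Schur--Weyl duality and $d \ge 5 > 4$, gives $M_8(\cG,V) = \dim_\C\End_\cG(V^{\otimes 4}) = |\mathfrak{S}_4| = 24$, since for $d \ge 4$ the image of $\C\mathfrak{S}_4$ acting by permuting the tensor factors is all of the commutant. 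As these permutations are $\cG$-equivariant, we always have $\End_\cG(V^{\otimes 4}) \subseteq \End_G(V^{\otimes 4})$, whence the a priori bound $M_8(G,V) \ge 24$, with equality if and only if the two commutants coincide.

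Next I would carry out two reductions via an elementary ``tensor-slot'' trick: if $\psi \in \End_G(V^{\otimes s})$ then $\psi \otimes \mathrm{id}_{V^{\otimes(t-s)}}$ lies in $\End_G(V^{\otimes t})$, and it centralizes the diagonal $\cG$ precisely when $\psi$ does. Applied to the equality $\End_G(V^{\otimes 4}) = \End_\cG(V^{\otimes 4})$, this forces $\End_G(V^{\otimes s}) = \End_\cG(V^{\otimes s})$ for every $s \le 4$; the case $s = 1$ gives $\End_G(V) = \C$, so $V|_G$ is irreducible, while $s = 2$ gives $M_4(G,V) = 2$. The same argument applied at level $t$ shows that any unitary $t$-group with $t \ge 4$ is automatically a unitary $4$-group. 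It therefore suffices to prove that no finite irreducible subgroup $G < \GL_d(\C)$ with $d \ge 5$ satisfies $M_8(G,V) = 24$.

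This last assertion is exactly where the main results of \cite{GT} enter, and it is the heart of the matter. One applies Larsen's alternative, in the finite form established in \cite{GT}, to the condition $M_4(G,V) = 2$ obtained above: a finite irreducible $G$ with matching fourth moment must, up to scalars and the usual normalizer/extension ambiguities, belong to a short explicit list — principally the finite classical (and related quasi-simple) groups in their Weil-type representations, together with certain extraspecial normalizers. The genuinely hard step is then to show that for every surviving candidate in dimension $d \ge 5$ the sixth or eighth moment strictly exceeds the target value $6$ or $24$, by exhibiting an extra constituent of $V^{\otimes 3}|_G$ or $V^{\otimes 4}|_G$. This tensor-decomposition computation, which rests on the classification of finite simple groups both for the candidate list and for the character-theoretic bookkeeping, is where I expect the main obstacle to lie, since it is precisely the part that admits no elementary, case-free argument.

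Finally I would assemble the ``in particular'' clause. If $d \ge 5$ and $t \ge 4$, then any unitary $t$-group $G < \U_d(\C)$ would be a unitary $4$-group by the reduction of the second paragraph, giving $M_8(G,V) = M_8(\cG,V) = 24$ and contradicting the strict inequality just established. Hence no unitary $t$-group exists in $\U_d(\C)$ for $d \ge 5$ and $t \ge 4$.
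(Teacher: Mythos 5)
Your proposal is correct and follows essentially the same route as the paper: the strict inequality $M_8(G,V) > M_8(\cG,V)$ is exactly \cite[Theorem 1.4]{GT}, which both you and the paper invoke for the hard classification work, and your ``tensor-slot'' reduction from level $t \geq 4$ to level $4$ is precisely the content of \cite[Lemma 3.1]{GT}, which the paper cites rather than re-proves. The only difference is expository: you spell out the moment--commutant dictionary, the Schur--Weyl computation $M_8(\cG,V)=24$, and the reduction lemma, where the paper simply cites \cite{GT} for all of these.
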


\begin{proof}
The first statement is precisely \cite[Theorem 1.4]{GT}. The second statement then follows from the first and 
\cite[Lemma 3.1]{GT}.
\end{proof}

We note that \cite[Theorem 1.4]{GT} also considers any Zariski closed subgroups $G$ of $\cG$ with the connected 
component $G^\circ$ being reductive. Then the only extra possibility with $M_8(G,V) = M_8(\cG,V)$ is when $G \geq [\cG,\cG] = \SL(V)$.
In fact, \cite{GT} also considers the problem in the modular setting.

Combined with Theorem \ref{main23} (below), Theorem \ref{gt-main1} yields the following consequence, which gives the complete classification
of unitary $t$-groups for any $t \geq 4$:

\begin{cor}\label{main-u4g}
Let $G < \U_d(\C)$ be a finite group and $d \geq 2$. Then $G$ is a unitary $t$-group for some $t \geq 4$ if and only if 
$d = 2$, $t = 4$ or $5$, and $G = \bZ(G)\SL_2(5)$.
\end{cor}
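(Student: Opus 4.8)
The plan is to prove both directions by reducing everything to the single eighth moment $M_8$, using the classification of unitary $3$-groups furnished by Theorem~\ref{main23}. The bridge is the standard monotonicity of designs: a unitary $t$-group is automatically a unitary $s$-group for every $1 \le s \le t$ (see \cite{RS,Sc}), because the identity \eqref{unit1} for the exponent $t$ forces the analogous identity for every smaller exponent. Hence, if $G < \U_d(\C)$ is a unitary $t$-group for some $t \ge 4$, then $G$ is in particular a unitary $4$-group, and therefore also a unitary $3$-group; this is what lets me feed $G$ into Theorem~\ref{main23}.

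For the ``only if'' direction I would argue by the dimension $d$. If $d \ge 5$, Theorem~\ref{gt-main1} gives $M_8(G,V) > M_8(\cG,V)$ for every finite $G < \cG = \GL(V)$, so no unitary $4$-group exists and a fortiori no unitary $t$-group with $t \ge 4$; this range is vacuous. If $d \in \{3,4\}$, I would use Theorem~\ref{main23} to enumerate the finitely many candidate unitary $3$-groups and then compute $M_8(G,V)$ for each directly from its character table, verifying in every case the strict inequality $M_8(G,V) > M_8(\cH,V)$, where by Schur--Weyl duality $M_8(\cH,V) = \dim\End_{\cH}(V^{\otimes 4}) = \sum_{\l} (f^\l)^2$, the sum over partitions $\l \vdash 4$ with at most $d$ parts and $f^\l$ the dimension of the associated irreducible $\SSS_4$-representation; this equals $23$ when $d = 3$ and $24$ when $d \ge 4$. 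Thus no unitary $4$-group occurs in dimension $3$ or $4$ either. Finally, for $d = 2$, Theorem~\ref{main23} identifies the unitary $3$-groups, up to multiplication by central scalars, as precisely the binary octahedral group $2{.}\SSS_4$ and the binary icosahedral group $2{.}\A_5 \cong \SL_2(5)$; here $M_8(\cH,V) = \frac{1}{5}\binom{8}{4} = 14$ (the Catalan number $C_4$, reflecting the $\SU_2$-decomposition of $V^{\otimes 4}$), and a character-table computation gives $M_8(2{.}\SSS_4,V) > 14$ while $M_8(\SL_2(5),V) = 14$, eliminating $2{.}\SSS_4$ and leaving exactly $G = \bZ(G)\SL_2(5)$.

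For the ``if'' direction and the precise range of $t$, I would first note that $M_{2t}(G,V)$ is unaffected by multiplying group elements by central unit scalars $\l I$, since $|\tr(\l g)| = |\tr(g)|$ when $|\l| = 1$. Writing $G = \bZ(G)\,\SL_2(5)$ and using $\bZ(G) \cap \SL_2(5) = \{\pm I\}$, one gets $M_{2t}(G,V) = M_{2t}(\SL_2(5),V)$ for every $t$, so it suffices to treat $\SL_2(5)$ itself in its faithful $2$-dimensional representation. The character values there are real, of the form $\tr(g) = 2\cos\theta_g \in \{\pm 2,\, 0,\, \pm 1,\, \pm\tfrac{1+\sqrt5}{2},\, \pm\tfrac{-1+\sqrt5}{2}\}$, and a direct moment sum over the $120$ elements gives $M_8(\SL_2(5),V) = 14 = C_4$ and $M_{10}(\SL_2(5),V) = 42 = C_5$, so $G$ is both a unitary $4$-group and a unitary $5$-group. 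The same computation gives $M_{12}(\SL_2(5),V) > 132 = C_6$, so $G$ is not a unitary $6$-group; by the monotonicity of the first paragraph $G$ is then not a unitary $t$-group for any $t \ge 6$, which pins the admissible exponents to $t \in \{4,5\}$.

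I expect the main obstacle to be computational rather than conceptual. The delicate points are, first, extracting from Theorem~\ref{main23} a complete and irredundant list of unitary $3$-groups in dimensions $2,3,4$ (so that the case analysis is genuinely exhaustive), and second, carrying out the $M_8$ sums --- and, for $\SL_2(5)$, also the $M_{10}$ and $M_{12}$ sums --- accurately enough to separate the borderline full-unitary value $C_t$ from the strictly larger value produced by every finite group other than $2{.}\A_5$. A secondary point needing care in the write-up is the reduction modulo central scalars: one must check that the description $G = \bZ(G)\SL_2(5)$ really captures all finite subgroups of $\U_2(\C)$ with the right projective image, and that the moment identity \eqref{unit1} is insensitive to the scalar part.
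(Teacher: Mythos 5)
Your overall strategy --- use monotonicity of moments to reduce to the classification of unitary $3$-groups, then eliminate every candidate except $\SL_2(5)$ by eighth-moment computations, and finish $\SL_2(5)$ itself with the $M_{10}$ and $M_{12}$ sums --- is essentially how the paper derives this corollary, and your numerical values are correct ($M_8(\cG,V)=24$ for $d\ge 4$, $23$ for $d=3$, $14$ for $d=2$; the Catalan values $C_t$ in dimension $2$; scalar-insensitivity of moments; the treatment of $\SL_2(5)$ is exactly Lemma \ref{d2}(iii)). However, there is a genuine gap in your dimension-$4$ case: you appeal to Theorem \ref{main23} to enumerate the unitary $3$-groups in $\U_4(\C)$, but Theorem \ref{main23} treats only $d=2$ and $d=3$ and says nothing about $d=4$. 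As written, your case $d=4$ rests on a theorem that does not apply. The needed input is Theorem \ref{main4}: its part (C) states outright that $M_8(G,V)\ne M_8(\cG,V)$ for every finite $G<\GL_4(\C)$, so no unitary $4$-group exists in dimension $4$ and no computation is required; alternatively its part (B) supplies the list of unitary $3$-groups ((A1): $\bZ(G)\cdot 2\A_7$ or $\bZ(G)\Sp_4(3)$; (A2): $L=[G,G]\le G<\Gamma_4$), after which your $M_8$ plan would go through --- noting that the (A2) family is infinite and only finite up to scalars, and that since moments can only increase when passing to a subgroup it suffices to test the maximal members modulo scalars, e.g. $G_{31}$. (In fairness, the paper's own sentence introducing the corollary cites only Theorems \ref{gt-main1} and \ref{main23}, but its actual proof of the $d=4$ case is Theorem \ref{main4}.)

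Two smaller points of comparison. First, in dimension $2$ you name the second unitary $3$-group as the binary octahedral group $2{.}\SSS_4$, while Theorem \ref{main23}(A2) gives $G_{12}\cong\GL_2(3)$; these are non-isomorphic groups, but they generate the same subgroup of $\U_2(\C)$ modulo unit scalars (both are full preimages of $\SSS_4\subset\mathrm{PSU}_2(\C)$ up to scalars), so your elimination of it by $M_8>14$ --- or more cheaply by Lemma \ref{d2}(ii), since $5\nmid 48$ --- is unaffected. Second, where you propose brute-force character-table evaluations of $M_8$, the paper rules out most candidates structurally: Ito's theorem applied to irreducibility on $\Sym^4(V)$ of dimension $35$ kills case (A2) in dimension $4$, Gallagher's theorem kills (A3), and the $27$-dimensional constituent of $V^{\otimes 2}\otimes(V^*)^{\otimes 2}$ kills $3\A_6$ in dimension $3$. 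Both routes are valid; the structural arguments avoid nontrivial computation, while your direct sums are conceptually simpler once the (corrected) enumeration is in hand.
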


Next, we obtain the following consequences of \cite[Theorems 1.5, 1.6]{GT}, where $F^*(G)=F(G)E(G)$ denotes the generalized Fitting subgroup of any finite group $G$ (respectively, $F(G)$ is the Fitting subgroup and $E(G)$ is the layer of $G$); furthermore, we follow the notation of \cite{Atlas} for various simple groups. We also refer the reader to \cite{GMST} and 
\cite{TZ2} for the definition and basic properties of {\it Weil representations} of (certain) finite classical groups.

\begin{thm}\label{gt-complex}
Let $V = \C^{d}$ with $d \geq 5$ and let $\cG = \GL(V)$. For any finite subgroup $G < \cG$,
set $\BS = S/\bZ(S)$ for $S := F^{*}(G)$. Then $M_{4}(G,V) = M_{4}(\cG,V)$ if and only if one of the following conditions holds.

\begin{enumerate}[\rm(i)]
\item {\sf (Lie-type case)} One of the following holds.

\begin{enumerate}[\rm(a)]
\item $\BS = \PSp_{2n}(3)$, $n \geq 2$, $G = S$, and
$V \da_{S}$ is a Weil module of dimension $(3^n \pm 1)/2$.

\item $\BS = \U_{n}(2)$, $n \geq 4$, $[G:S] = 1$ or $3$, and $V \da_{S}$ is a
Weil module of dimension $(2^{n}-(-1)^{n})/3$.
\end{enumerate}

\item {\sf (Extraspecial case)} $d = p^{a}$ for some prime $p$ and $F^{*}(G) = F(G)=\bZ(G)E$, where $E = p^{1+2a}_+$ is an extraspecial  
$p$-group of order $p^{1+2a}$ and type $+$. 
Furthermore, $G/\bZ(G)E$ is a subgroup of $\Sp(W) \cong \Sp_{2a}(p)$ that acts transitively on
$W \smallsetminus \{0\}$ for $W = E/\bZ(E)$, and so is listed in Theorem \ref{trans} (below). If $p > 2$ then $E \lhd G$; if $p = 2$ then 
$F^*(G)$ contains a normal subgroup $E_1 \lhd G$, where $E_1 = C_4 * E$ is a central product of order $2^{2a+2}$ of 
$\bZ(E_1)=C_4 \leq \bZ(G)$ with $E$.

\item {\sf (Exceptional cases)} $S=\bZ(G)[G^*,G^*]$, and $(\dim(V), \BS, G^*)$ is as listed in Table {\rm I}. Furthermore,
in all but lines {\rm 2--6} of Table {\rm I}, $G = \bZ(G)G^*$. In lines {\rm 2--6}, either $G = S$ or $[G:S] = 2$ and $G$ induces on $\BS$ the outer 
automorphism listed in the fourth column of the table.
\end{enumerate}
In particular, $G < \cH = \U(V)$ is a unitary $2$-group if and only if $G$ is as described in {\rm (i)--(iii)}.
\end{thm}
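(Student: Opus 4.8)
The plan is to extract the main equivalence directly from \cite[Theorems 1.5 and 1.6]{GT}, after first converting the defining equality \eqref{unit1} into a representation-theoretic statement, and then to obtain the final clause by a unitarizability argument. Using $M_4(\cH,V)=M_4(\cG,V)$ (recalled before the theorem) together with Schur orthogonality, one gets $M_4(\cG,V)=\la\chi_V^2,\chi_V^2\ra_\cG=\dim\End_\cG(V\ot V)$, and since $V\ot V=\Sym^2 V\op\wedge^2 V$ is a sum of two irreducible $\GL(V)$-modules of distinct dimensions for every $d\geq 2$, we have $M_4(\cG,V)=2$. For a finite subgroup $G<\cG$ with character $\chi$ one has $M_4(G,V)=\la\chi^2,\chi^2\ra=\|\chi_{\Sym^2 V}\|^2+\|\chi_{\wedge^2 V}\|^2+2\,\mathrm{Re}\,\la\chi_{\Sym^2 V},\chi_{\wedge^2 V}\ra\geq 2$, with equality precisely when both $\Sym^2 V$ and $\wedge^2 V$ are irreducible over $\C G$ (they then have distinct dimensions, hence are automatically inequivalent). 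Thus $M_4(G,V)=M_4(\cG,V)$ if and only if $G$ acts irreducibly on both $\Sym^2 V$ and $\wedge^2 V$.

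First I would record the reduction forced by this condition: by the log-convexity (Cauchy--Schwarz) inequality $M_2(G,V)^2\leq M_0(G,V)\cdot M_4(G,V)=2$, so $M_2(G,V)=\la\chi,\chi\ra=1$ and $V\da_G$ is irreducible. This places $G$ exactly in the framework of \cite{GT}, in which the finite irreducible subgroups of $\GL(V)$ satisfying $M_4(G,V)=M_4(\cG,V)$ are classified. The core of the argument is then to invoke \cite[Theorems 1.5 and 1.6]{GT} and match their conclusion, organized around the Aschbacher-type analysis of $F^*(G)$, against the three families here: the extraspecial-normalizer configuration giving (ii), and the quasisimple cases yielding the Weil-module families of (i) together with the finite list of exceptions recorded in Table I and case (iii).

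Finally, for the ``in particular'' clause I would argue that the unitary $2$-groups are exactly the groups in (i)--(iii). Since $M_4(\cH,V)=M_4(\cG,V)$, a finite subgroup $G<\cH$ is a unitary $2$-group if and only if $M_4(G,V)=M_4(\cG,V)$. Every finite subgroup of $\cG=\GL(V)$ preserves a positive-definite Hermitian form (obtained by averaging the standard form over $G$) and is therefore conjugate in $\cG$ into $\cH=\U(V)$; as $M_4(G,V)$ depends only on the character of $G$ and is thus conjugation-invariant, the finite subgroups of $\cH$ achieving equality are, up to conjugacy, precisely those listed in (i)--(iii).

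The step I expect to be the main obstacle is the bookkeeping in the middle paragraph. Because \cite{GT} is phrased in a more general, Zariski-closed (and partly modular) setting, one must specialize it cleanly to finite complex subgroups and verify that its case division reproduces (i)--(iii) with neither gaps nor overlaps: in particular, checking the transitivity hypothesis on $\Sp(W)\cong\Sp_{2a}(p)$ that feeds into Theorem \ref{trans}, handling the central-product subtlety $E_1=C_4*E$ when $p=2$, and tracking the index-$2$ extensions and outer automorphisms recorded in lines 2--6 of Table I.
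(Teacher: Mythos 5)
Your first and last paragraphs are correct and agree with the paper's own reductions: the identity $M_4(\cG,V)=2$ and the translation of $M_4(G,V)=M_4(\cG,V)$ into irreducibility of $G$ on both $\Sym^2(V)$ and $\wedge^2(V)$ is exactly the criterion the paper uses (citing the discussion in \cite[\S 2]{GT}), and the averaging/conjugation remark disposes of the ``in particular'' clause. The gap is your middle paragraph, which is where all the content lives. You propose to ``invoke \cite[Theorems 1.5 and 1.6]{GT} and match their conclusion'' against (i)--(iii), but this matching is not bookkeeping, because the conclusion of \cite[Theorem 1.5]{GT} is strictly weaker than the statement to be proved. In its case (B), that theorem allows $\BS=\PSp_{2n}(q)$ with $q\in\{3,5\}$ and allows \emph{both} families of Weil modules of $\PSU_n(2)$, of dimensions $(2^n-(-1)^n)/3$ and $(2^n+2(-1)^n)/3$, whereas the present theorem asserts that only $q=3$ and only the first unitary family survive. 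Eliminating the extra cases is a genuine mathematical step, which the paper carries out as follows: in those cases the restriction $\chi\da_L$ to the layer $L=E(G)$ is real-valued (Theorems 4.1 and 5.2 of \cite{TZ1}), hence one of $\Sym^2(\chi\da_L)$, $\wedge^2(\chi\da_L)$ contains $1_L$, so one of $\Sym^2(\chi\da_S)$, $\wedge^2(\chi\da_S)$ contains a linear character; since $[G:S]\le 3$ while both modules have dimension at least $d(d-1)/2\ge 10$, the group $G$ cannot act irreducibly on them, contradicting the very criterion you set up. Your proposal neither performs nor even flags this elimination (your list of anticipated obstacles omits it entirely).

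The same defect recurs in the extraspecial case. Case (C) of \cite[Theorem 1.5]{GT}, with \cite[Lemma 5.1]{GT}, yields only that $F^*(G)=F(G)=\bZ(G)E$ with $E$ extraspecial of order $p^{1+2a}$ and $G/\bZ(G)E$ transitive on $W\smallsetminus\{0\}$; it does not yield the refinements asserted in (ii), namely that $E$ may be taken of type $+$, that $E\lhd G$ when $p>2$, and that for $p=2$ there is a normal subgroup $E_1=C_4*E\lhd G$ with $\bZ(E_1)=C_4\le\bZ(G)$. The paper proves these separately: for $p>2$, transitivity forces every element of $E\smallsetminus\bZ(E)$ to have order $p$ (so $E$ has type $+$), and $E$ is generated by the elements of order $p$ in $\bZ(G)E$, hence is normal in $G$; for $p=2$, if $|\bZ(G)|<4$ then $G/\bZ(G)E$ would embed in $O^{\epsilon}_{2a}(2)$, which is not transitive on $W\smallsetminus\{0\}$ for $a\ge 2$, a contradiction, so $|\bZ(G)|\ge 4$, giving $E_1=C_4*E$, and the isomorphism $C_4*2^{1+2a}_+\cong C_4*2^{1+2a}_-$ fixes the type. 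You name the $C_4*E$ subtlety as an expected obstacle but offer no argument for it. Two smaller inaccuracies: \cite[Theorem 1.6]{GT} concerns $M_6$ and plays no role in this theorem; and the determination of $G$ above $S$ in case (i) (that $[G:S]\in\{1,3\}$, forced by irreducibility of $V\da_S$) is also left undone. In short, your framing coincides with the paper's approach, but the substantive core of the proof is missing.
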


\vskip10pt
\begin{figure}[ht]
\centerline
{{\sc Table} I. Exceptional examples in $\cG = \GL_d(\C)$ with $d \geq 5$}
\begin{center}
\begin{tabular}{|r||c|c|c|c|c|} \hline \skipa
     $d$ & $\BS$ & $G^*$ & Outer & 
     {\small \begin{tabular}{c}The largest $2k$ with\\ $M_{2k}(G,V) = M_{2k}(\cG,V)$ \end{tabular}}& 
         \begin{tabular}{c}$M_{2k+2}(G,V)$ vs.\\$M_{2k+2}(\cG,V)$ \end{tabular}\\ 
          \skipa \hline \hline \skipa
     $6$ & $\A_{7}$ & $6\A_{7}$ &  & $4$ & $21$ vs. $6$\\
     $6$ & $\L_{3}(4)$$^{\;(\star)}$ & $6\L_{3}(4) \cdot 2_{1}$ & $2_1$ & $6$ & $56$ vs. $24$\\
     $6$ & $\U_{4}(3)$$^{\;(\star)}$ & $6_{1} \cdot \U_{4}(3)$ & $2_2$ & $6$ & $25$ vs. $24$\\
\hline
     $8$ & $\L_{3}(4)$ & $4_{1} \cdot \L_{3}(4)$ & $2_3$ & $4$ & $17$ vs. $6$\\
\hline
     $10$ & $M_{12}$ & $2M_{12}$ & $2$ & $4$ & $15$ vs. $6$\\ 
     $10$ & $M_{22}$ & $2M_{22}$ & $2$ & $4$ & $7$ vs. $6$\\ \hline
     $12$ & $Suz$$^{\;(\star)}$ & $6Suz$ &  & $6$ & $25$ vs. $24$\\ \hline
     $14$ & $\ta B_{2}(8)$ & $\ta B_{2}(8) \cdot 3$ & & $4$ & $90$ vs. $6$\\
     $18$ & $J_{3}$$^{\;(\star)}$ & $3J_{3}$ &  & $6$ & $238$ vs. $24$\\ \hline
     $26$ & $\ta F_{4}(2)'$ & $\ta F_{4}(2)'$ &  & $4$ & $26$ vs. $6$\\ \hline
     $28$ & $Ru$ & $2Ru$ &  & $4$ & $7$ vs. $6$\\ \hline  
     $45$ & $M_{23}$ & $M_{23}$ &  & $4$ & $817$ vs. $6$\\ 
     $45$ & $M_{24}$ & $M_{24}$ &  & $4$ & $42$ vs. $6$\\ \hline
     $342$ & $O'N$ & $3O'N$ &  & $4$ & $3480$ vs. $6$\\ \hline
     $1333$ & $J_{4}$ & $J_{4}$ &  & $4$ & $8$ vs. $6$\\ \hline
\end{tabular}
\end{center}
\end{figure}

Note that in Table I, the data in the sixth column is given when we take $G = G^*$.

\begin{proof}
We apply \cite[Theorem 1.5]{GT} to $(G,\cG)$. Then case (A) of the theorem is impossible as $G$ is finite, and case (D) leads to case 
(iii) as $\cG = \GL(V)$. 

In case (B) of 
\cite[Theorem 1.5]{GT}, we have that $\BS = \PSp_{2n}(q)$ with $n \geq 2$ and $q = 3,5$, or 
$\BS = \PSU_n(2)$ with $n \geq 4$, and $V\da_S$ is irreducible. It is easy to see that the latter condition implies that $G/S$ has order $1$ or $3$. Next, $L = E(G)$ is a quotient of $\Sp_{2n}(q)$ or $\SU_n(2)$ by a central subgroup, and $S = \bZ(S)L$.
Let $\chi$ denote the character of the $G$-module $V$. 
As $d > 4$, the condition $M_4(G,V) = M_4(\cG,V)$ is equivalent to that $G$ act irreducibly on both $\Sym^2(V)$ and $\wedge^2(\chi)$
(see the discussion in \cite[\S2]{GT}).
Hence, if $\chi\da_L$ is real-valued, then either $\Sym^2(\chi\da_L)$ or $\wedge^2(\chi\da_L)$ contains $1_L$, whence 
either $\Sym^2(\chi\da_S)$ or $\wedge^2(\chi\da_S)$ contains a linear character. But both $\Sym^2(V)$ and $\wedge^2(V)$ have dimension
at least $d(d-1)/2 \geq 10$ and $[G:S] \leq 3$, so $G$ cannot act irreducibly on them, a contradiction. We have shown that 
$\chi\da_L$ is not real-valued. Now using Theorems 4.1 and 5.2 of \cite{TZ1}, we can rule out the case $\BS = \PSp_{2n}(5)$ and 
the case $(\BS,\dim(V)) = (\PSU_n(2),(2^n+2(-1)^n)/3)$, as $\chi\da_L$ is real-valued in those cases. 

Case (C), together with \cite[Lemma 5.1]{GT}, leads to case (ii) listed above, except for the explicit description of $E$ and $E_1$. Suppose
$p > 2$. Then at least one element in $E \smallsetminus \bZ(E)$ has order $p$, whence all elements in $E \smallsetminus \bZ(E)$ 
have order $p$ by the transitivity of $G/\bZ(G)E$ on $W \smallsetminus \{0\}$, i.e. $E$ has type $+$. Also, note that $E$ is generated by all
elements of order $p$ in $\bZ(G)E$, and so $E \lhd G$. Next suppose that $p = 2$ and let $E_1 \lhd G$ be generated by all elements of order 
at most $4$ in $\bZ(G)E$. If $|\bZ(G)| < 4$, then $F^*(G) = E_1 = E$ is an extraspecial $2$-group of order $2^{1+2a}$ of type $\epsilon$
for some $\epsilon = \pm$. In this case, $G/\bZ(G)E \hookrightarrow O^{\epsilon}_{2a}(2)$ and so cannot be transitive on 
$W \smallsetminus \{0\}$ (as $a \geq 2$), a contradiction. So $|\bZ(G)| \geq 4$. In this case, one can show that $E_1 = C_4 * E$
with $\bZ(E) < C_4 \leq \bZ(G)$, and since $C_4 * 2^{1+2a}_+ \cong C_4 * 2^{1+2a}_-$, we may choose $E$ to have type $+$.
\end{proof} 
 
We note that the case of Theorem \ref{gt-complex} where $G$ is almost quasisimple was also treated in 
\cite{M}. More generally, the classification of subgroups of a classical group ${\mathsf {Cl}}(V)$ in characteristic $p$
that act irreducibly on the heart of the tensor square, symmetric square, or alternating square of 
$V \otimes_{\F_p}\overline{\F}_p$, is of particular importance to the Aschbacher-Scott program \cite{A} of 
classifying maximal groups of finite classical groups. See \cite{Mag}, \cite{MM}, \cite{MMT} for results on this problem
in the modular case. 
 
\begin{thm}\label{gt-complex-m6}
Let $V = \C^{d}$ with $d \geq 5$ and let $\cG = \GL(V)$. Assume $G$ is a
finite subgroup of $\cG$. Then $M_{6}(G,V) = M_{6}(\cG,V)$ if and only if one of the following 
two conditions holds.

\begin{enumerate}[\rm(i)]
\item {\sf (Extraspecial case)} $d = 2^{a}$ for some $a > 2$, and  
$G = \bZ(G)E_1 \cdot Sp_{2a}(2)$, where $E \cong 2^{1+2a}_{+}$ is extraspecial and of type $+$ and 
$E_1 = C_4*E$ with $C_4 \leq \bZ(G)$. 

\item {\sf (Exceptional cases)} Let $\BS = S/\bZ(S)$ for $S = F^*(G)$. Then 
$$\BS \in \{\L_{3}(4),\U_{4}(3),Suz,J_{3}\},$$ 
and $(\dim(V), \BS, G^*)$ is as listed in the lines marked by $^{(\star)}$ in Table {\rm I}. Furthermore, either 
$G = \bZ(G)G^*$, or 
$\BS = \U_4(3)$ and $S = \bZ(G)G^*$.
\end{enumerate}
In particular, $G < \cH = \U(V)$ is a unitary $3$-group if and only if $G$ is as described in {\rm (i), (ii)}.
\end{thm}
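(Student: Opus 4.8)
The plan is to derive the classification from \cite[Theorem 1.6]{GT} applied to the pair $(G,\cG)$, in exact parallel with the use of \cite[Theorem 1.5]{GT} in the proof of Theorem \ref{gt-complex}. The organizing principle is the monotone hierarchy of moments from \cite[Lemma 3.1]{GT}: one always has $M_{2k}(G,V) \geq M_{2k}(\cG,V)$, and equality at one level forces equality at all lower levels. Hence $M_6(G,V) = M_6(\cG,V)$ already implies $M_4(G,V) = M_4(\cG,V)$, so $G$ must occur among the Lie-type, extraspecial, and exceptional families of Theorem \ref{gt-complex}, and it remains only to decide, within each family, when the sixth moment persists. As in \cite[\S2]{GT} it is convenient to reformulate: since $M_6(\cG,V) = \dim \End_{\cG}(V^{\otimes 3}) = 6$, the equality $M_6(G,V) = M_6(\cG,V)$ holds precisely when the three distinct irreducible $\GL(V)$-constituents of $V^{\otimes 3}$ all remain irreducible and pairwise non-isomorphic over $G$.

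First I would eliminate the Lie-type families (i)(a) and (i)(b), the Weil modules of $\PSp_{2n}(3)$ and of $\U_n(2)$: for these some $\GL(V)$-constituent of $V^{\otimes 3}$ splits over $G$, so $M_6(G,V) > 6$; this follows from the known decompositions of the symmetric and exterior cubes of Weil characters, or simply from the absence of these groups from the conclusion of \cite[Theorem 1.6]{GT}. The exceptional family (iii) is then settled by inspecting Table I: the column recording the largest $2k$ with $M_{2k}(G,V) = M_{2k}(\cG,V)$ equals $6$ exactly on the four lines marked by $^{(\star)}$, i.e.\ $\BS \in \{\L_3(4), \U_4(3), Suz, J_3\}$, the remaining exceptional lines having this value equal to $4$. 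A direct evaluation of $\langle (\chi\bar\chi)^3, 1_G\rangle$ on the two-fold extensions occurring on these lines then shows that the outer extension preserves sixth-moment agreement only for $\U_4(3)$, which accounts for the dichotomy ``$G = \bZ(G)G^*$, or $\BS = \U_4(3)$ and $S = \bZ(G)G^*$'' in part (ii).

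The substantive step is the extraspecial family (ii) of Theorem \ref{gt-complex}, with $d = p^a$, $F^*(G) = \bZ(G)E$, $W = E/\bZ(E)$, and $H := G/\bZ(G)E$ a transitive subgroup of $\Sp(W) \cong \Sp_{2a}(p)$ on $W \smallsetminus \{0\}$. The module $\End(V)$ splits under $\bZ(G)E$ into the $|W|$ one-dimensional eigenspaces indexed by $W$, permuted linearly by $H$, and \cite[Lemma 5.1]{GT} identifies $M_{2t}(G,V)$ with the number of $H$-orbits on $\{(w_1, \ldots, w_t) \in W^t : w_1 + \cdots + w_t = 0\}$. For $t = 3$ this is the number of $H$-orbits on $W \times W$ (identifying $(w_1, w_2)$ with $(w_1, w_2, -w_1 - w_2)$), to be compared with $M_6(\cG,V) = 6$. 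A short orbit count for the full group $\Sp_{2a}(p)$, sorting independent pairs by the value of $\langle w_1, w_2 \rangle \in \F_p$, already produces more than $6$ orbits whenever $p$ is odd (it gives $2p+2$ orbits once $a \geq 2$), so the hypothesis forces $p = 2$; then $a > 2$ is automatic from $d = 2^a \geq 5$. It remains to see that $H$ must be all of $\Sp_{2a}(2)$: here $M_6(G,V) = 6$ is equivalent to $H$ being transitive both on independent orthogonal pairs and on hyperbolic pairs, and running through the transitive subgroups listed in Theorem \ref{trans} shows that each proper one fails one of these — a divisibility obstruction already excludes $G_2(2) < \Sp_6(2)$, for example — hence yields strictly more than $6$ orbits. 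Combined with the structure of $E_1 = C_4 * E$ established in the proof of Theorem \ref{gt-complex}, this gives case (i).

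The main obstacle is exactly this last point: upgrading the crude transitivity on $W \smallsetminus \{0\}$ that governs $M_4$ to the ``two orbits on independent pairs'' condition that governs $M_6$, and checking against the list of Theorem \ref{trans} that over $\F_2$ it is met only by the full symplectic group, while over odd $\F_p$ it fails even for $\Sp_{2a}(p)$. This is where the explicit orbit-counting identity of \cite[Lemma 5.1]{GT}, rather than any soft representation-theoretic argument, does the work. Finally, the concluding assertion is immediate: by definition $G$ is a unitary $3$-group exactly when $M_6(G,V) = M_6(\cH,V)$, and $M_6(\cH,V) = M_6(\cG,V)$ as noted in the introduction, so the unitary $3$-groups are precisely the groups described in (i) and (ii).
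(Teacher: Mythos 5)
The paper's own proof of this theorem is a one-line citation: apply \cite[Theorem 1.6]{GT} together with Theorem \ref{gt-complex}(ii). Your proposal instead tries to re-derive the classification from the $M_4$-level result, and a substantial part of it is sound: the reduction via the monotonicity in \cite[Lemma 3.1]{GT} (equality at level $6$ forces equality at level $4$, so $G$ lies in one of the families of Theorem \ref{gt-complex}), and the treatment of the extraspecial family by counting $H$-orbits on zero-sum triples, i.e.\ on $W\times W$ --- odd $p$ already gives at least $2p+1>6$ orbits for the full group $\Sp_{2a}(p)$, hence for every subgroup, forcing $p=2$; and then running through Theorem \ref{trans} to force $H=\Sp_{2a}(2)$ --- is essentially the machinery of \cite[\S 5]{GT} and would go through. (Note, however, that your elimination of the Lie-type family falls back on citing \cite[Theorem 1.6]{GT} itself; once that citation is allowed, the whole theorem follows by the paper's one-line argument, so the independent value of your derivation rests on the parts you carry out yourself.)

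The genuine error is in the exceptional case: your dichotomy is stated backwards. Since $M_6(K,V)\geq M_6(\cG,V)=6$ for \emph{every} finite $K<\cG$, and since $M_6$ can only decrease when the group grows, passing to an outer extension can never destroy sixth-moment agreement: if $M_6(S,V)=6$ then $6\leq M_6(S.2,V)\leq M_6(S,V)=6$ automatically. So ``the outer extension preserves sixth-moment agreement'' is free on every line, and no computation can show it happens ``only for $\U_4(3)$''; what needs to be computed is the opposite direction, namely whether agreement for the larger group descends to $S$. That is exactly where $\L_3(4)$ and $\U_4(3)$ differ. For $\L_3(4)$ the group $G^*=6\L_3(4)\cdot 2_1$ listed in Table I \emph{already contains} the outer automorphism, and the theorem's condition $G=\bZ(G)G^*$ says a unitary $3$-group on this line must induce $2_1$; the required check is that $M_6(6\L_3(4),V)>6$, because a $\GL(V)$-constituent of $V^{\otimes 3}$ splits over $6\L_3(4)$ into two summands that are fused by $2_1$. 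For $\U_4(3)$ the group $G^*=6_1\cdot\U_4(3)$ contains no outer automorphism, $M_6(G^*,V)=6$ by Table I, and admissibility of the extension by $2_2$ is then automatic by the squeeze above --- this is the content of the alternative ``$\BS=\U_4(3)$ and $S=\bZ(G)G^*$''. Read literally, your argument would conclude that $6\L_3(4)$ is a unitary $3$-group while $6\L_3(4)\cdot 2_1$ is not, which is the opposite of statement (ii); so as written, the exceptional branch of your proof yields a wrong classification on the $\L_3(4)$ line.
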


\begin{proof}
Apply \cite[Theorem 1.6]{GT} and also Theorem \ref{gt-complex}(ii) to $(G,\cG)$.
\end{proof}

The transitive subgroups of $\GL_n(p)$ are determined by Hering's theorem \cite{He} (see also \cite[Appendix 1]{L}), which however 
is not easy to use in the solvable case. For the complete determination of unitary 2-groups in Theorem \ref{gt-complex}(ii), 
we give a complete classification of such groups in the symplectic case that is needed for us. The notations such as
${\tt SmallGroup}(48,28)$ are taken from the {\tt SmallGroups} library in \cite{GAP}.
  
\begin{thm}\label{trans}
Let $p$ be a prime and let $W = \F_p^{2n}$ be endowed with a non-degenerate symplectic form.
Assume that a subgroup $H \le \Sp(W)$ acts transitively on $W \smallsetminus \{0\}$.
Then $(H,p,2n)$ is as in one of the following cases.
 
\begin{enumerate}[\rm(A)]
\item {\sf (Infinite classes)}:

\begin{enumerate}[\rm(i)]

\item $n = bs$ for some integers $b,s \geq 1$, and $\Sp_{2b}(p^s)' \lhd H \leq \Sp_{2b}(p^s) \rtimes C_s$.

\item $p=2$, $n = 3s$ for some integer $s \geq 2$; and $G_2(2^s) \lhd H \leq G_2(2^s) \rtimes C_s$.

\end{enumerate}

\item {\sf (Small cases)}:

\begin{enumerate}[\rm(i)]
\item
$(2n,p)=(2,3)$, and $H=Q_8$.
\item
$(2n,p)=(2,5)$, and $H=\SL_2(3)$. 

\item
$(2n,p)=(2,7)$, and $H=\SL_2(3).C_2={\tt SmallGroup}(48,28)$.

\item
$(2n,p)=(2,11)$, and $H=\SL_2(5)$.

\item $(2n,p)=(4,3)$, and $H={\tt SmallGroup}(160,199)$, ${\tt SmallGroup}(320,1581)$,
$2.{\sf S}_5$, $\SL_2(9)$, $\SL_2(9)\rtimes C_2={\tt SmallGroup}(1440,4591)$, or\\
$C_2 . ((C_2 \times C_2 \times C_2 \times C_2) \rtimes {\sf A}_5)={\tt SmallGroup}(1920,241003)$.

\item $(2n,p)=(6,2)$, and $H=\SL_2(8)$, $\SL_2(8) \rtimes C_3$, 
$\SU_3(3)$, $\SU_3(3) \rtimes C_2$.

\item $(2n,p) = (6,3)$ and $H = \SL_2(13)$.
\end{enumerate}
\end{enumerate}
 \end{thm}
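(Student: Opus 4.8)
The plan is to obtain Theorem~\ref{trans} from Hering's classification of transitive linear groups \cite{He} (see also \cite[Appendix~1]{L}) by imposing the symplectic constraint. Viewing $H$ as a transitive subgroup of $\GL(W) = \GL_{2n}(p)$, Hering's theorem says that, up to conjugacy in $\GL(W)$, one of the following holds: (1) $H \le \Gamma L_1(p^{2n})$; (2) $\SL_a(q) \lhd H$ acting on its natural module, where $q = p^s$ and $as = 2n$ with $a \ge 2$; (3) $\Sp_{2b}(q) \lhd H$ acting on its natural module, where $q = p^s$ and $2bs = 2n$; (4) $p = 2$ and $G_2(q)' \lhd H$ acting on the natural $6$-dimensional module, where $q = 2^s$ and $6s = 2n$; or (5) $(2n,p)$ is one of finitely many exceptional pairs with $2n \in \{2,4,6\}$, and $H$ lies in an explicit finite list of groups. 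I would then run through (1)--(5) and keep precisely those $H$ that preserve a nondegenerate alternating $\F_p$-bilinear form. The guiding principle is that $H \le \Sp(W)$ forces $W \cong W^*$ as $\F_p H$-modules, and in fact through an alternating form, which is a strong restriction on the module.

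For the infinite families (2)--(4) the symplectic condition can be read off module-theoretically. In case (3) the $\F_q$-valued symplectic form composed with $\Tr_{\F_q/\F_p}$ gives a nondegenerate alternating $\F_p$-form preserved by $\Sp_{2b}(q)$ and by the field automorphisms, whose stabilizer inside $\Sp(W)$ is $\Sp_{2b}(q) \rtimes C_s$; since $\Sp_{2b}(q)$ is already transitive on $\F_q^{2b} \smallsetminus \{0\}$, this produces exactly family~(A)(i), and the subcase $a = 2$ of (2) is absorbed here via $\SL_2(q) = \Sp_2(q)$ (so $b = 1$). Case (4) is entirely parallel, using $G_2(2^s) \le \Sp_6(2^s)$ and its transitivity, and yields family~(A)(ii); here I restrict to $s \ge 2$ since $G_2(2)' \cong \SU_3(3)$ resurfaces among the small $(6,2)$ cases. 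Case (2) with $a \ge 3$ is ruled out: the natural module $V$ of $\SL_a(q)$ has $V \not\cong V^*$, and no Frobenius twist of $V$ is isomorphic to a twist of $V^*$ (these carry the distinct fundamental weights $\varpi_1$ and $\varpi_{a-1}$), so after restriction of scalars $W \not\cong W^*$ as $\F_p\SL_a(q)$-modules and $H$ can preserve no nondegenerate form. In case (1) a short trace computation shows that the largest subgroup of the Singer torus $\F_{p^{2n}}^\times$ preserving a nondegenerate alternating $\F_p$-form is the norm-one torus $\{x : x^{p^n+1} = 1\}$ (it preserves the alternating form $x,y \mapsto \Tr_{\F_{p^{2n}}/\F_p}(c\,x^{p^n} y)$ for a suitable $c$); comparing its order, even after extension by field automorphisms, with $|W \smallsetminus \{0\}| = p^{2n}-1$ then forces $2n = 2$ with $p$ small, so these $H$ are recovered among the small cases (e.g.\ $Q_8$ for $(2,3)$).

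It remains to dispose of the finitely many exceptional pairs arising in (1) and (5), and this is where the bulk of the work---and the main obstacle---lies. For each such $(2n,p)$ with $2n \in \{2,4,6\}$ I would intersect Hering's candidate list with $\Sp_{2n}(p)$, discard the groups that either fail to fix the alternating form or fail to be transitive on $W \smallsetminus \{0\}$, and identify the survivors precisely, thereby assembling family~(B). In dimension $2$ this uses $\Sp_2(p) = \SL_2(p)$, reducing the task to the classical enumeration of transitive subgroups of $\SL_2(p)$: apart from $\SL_2(p)$ itself (already in (A)(i) with $b = s = 1$), a proper transitive subgroup has order $p^2-1$ and exists only for $p \in \{3,5,7,11\}$, giving the binary polyhedral groups $Q_8$, $\SL_2(3)$, $\SL_2(3).C_2$, $\SL_2(5)$ of (B)(i)--(iv). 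Dimensions $4$ and $6$ are the genuinely delicate part: the extraspecial-type normalizers in $\Sp_4(3)$, together with $2.\SSS_5$ and $\SL_2(9)$ in dimension $4$, and $\SL_2(8)$, $\SU_3(3)$, and $\SL_2(13)$ (with their relevant extensions) in dimension $6$, must each be tested individually for the symplectic condition and for transitivity, and then pinned down up to isomorphism. I expect to carry this out by explicit computation in \cite{GAP}, which also furnishes the {\tt SmallGroups} identifiers recorded in the statement. Finally, several survivors---such as $\SL_2(9)$ and $\SL_2(9) \rtimes C_2$ in the $(4,3)$ case, or $Q_8 = \Sp_2(3)'$ and $\SL_2(8) = \Sp_2(8)$---simultaneously belong to family~(A); these are recorded in both places, so that (A) together with (B) yields the complete list. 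The principal difficulty is the exhaustive, computation-assisted elimination in these low-dimensional exceptional configurations.
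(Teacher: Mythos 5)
Your proposal is correct, and its skeleton --- Hering's theorem as in \cite[Appendix 1]{L}, retention of the symplectic and $G_2$ classes, elimination of the $\Gamma{\mathrm L}_1$ and $\SL_a$ ($a\geq 3$) classes, and explicit \cite{GAP} computation for the exceptional low-dimensional configurations --- is exactly the paper's. The differences lie in the two elimination arguments, and both of yours are valid alternatives. For $\SL_a(p^s)\lhd H$ with $a\geq 3$, the paper compares minimal degrees of faithful complex representations: $\dl(H)\leq (p^n+1)/2$ since $H\leq\Sp_{2n}(p)$ by \cite[Theorem 5.2]{TZ1}, versus $\dl(H)>p^{s(a-1)}$ since $H\rhd\SL_a(p^s)$ by \cite[Theorem 3.1]{TZ1}; you instead observe that $H\leq\Sp(W)$ would force $W\cong W^*$ as $\F_p\SL_a(p^s)$-modules, which fails because the composition factors of $W\otimes\overline{\F}_p$ carry the highest weights $p^i\varpi_1$ while those of $W^*\otimes\overline{\F}_p$ carry $p^i\varpi_{a-1}$, and these never coincide for $a\geq 3$. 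Your argument is more elementary in that it avoids character-degree bounds entirely. For $H\leq\Gamma{\mathrm L}_1(p^{2n})$, the paper uses a Zsigmondy prime divisor of $p^{2n}-1$ to force $C=H\cap\Gamma_0$ to act irreducibly and then quotes \cite[Satz II.9.23]{Hu} to get $|C|$ dividing $p^n+1$; your trace computation (writing any invariant form as $(u,v)\mapsto\Tr(\sum_i c_iu^{p^i}v)$, invariance under $x\in C$ forces $x^{p^i+1}=1$ whenever $c_i\neq 0$, and $\gcd(p^i+1,p^{2n}-1)\leq p^n+1$) reaches the same bound directly; both arguments then conclude $p^n-1\mid 2n$, which fails for $n>1$, and for $n=1$ the paper uses the minimal index $p+1$ of proper subgroups of $\SL_2(p)$ while you invoke Dickson's enumeration (proper transitive subgroups of $\SL_2(p)$ have order exactly $p^2-1$ and exist only for $p\in\{3,5,7,11\}$), which is equally fine. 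Two expository cautions, neither a mathematical gap: when quoting Hering, the symplectic class must be stated with the normal subgroup $\Sp_{2b}(p^s)'$ rather than $\Sp_{2b}(p^s)$ --- otherwise, e.g., $H=\Sp_4(2)'\cong\A_6$ acting on $\F_2^4$ escapes your net --- and the ``short trace computation,'' being the crux of the $\Gamma{\mathrm L}_1$ case, should be written out in full, since it is where the bound $p^n+1$ actually gets proved.
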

 
 \begin{proof}
We may assume that $(2n,p)$ is not in one of the small cases listed in (B), which are computed using \cite{GAP}.
We have that $[H:\bfC_H(v)]=p^{2n}-1$, for every $v \in W \smallsetminus \{0\}$. Now we apply Hering's theorem,
as given in \cite[Appendix 1]{L} and analyze possible classes for $H$.

\smallskip
(a) Suppose that $H \le \Gamma {\mathrm L}_1(p^{2n})$, which is the semidirect product
 of $\Gamma_0$ (the multiplicative field of $\F_{p^{2n}}$) and the Galois
 automorphism $\sigma$ of order $2n$.
 If $n=1$, then $H \le \SL_2(p)$, which has order $p(p-1)(p+1)$, and we may assume that $p \geq 13$. As the smallest index 
 of proper subgroups of $\SL_2(p)$ is $p+1$ (see e.g. \cite[Table VI]{TZ1}), we conclude that $H = \SL_2(p)$.
 So we may assume that $n>1$. We may also assume that $(2n,p) \ne (2,6)$.
 Hence, we can consider a Zsigmondy (odd) prime divisor $r$ of  $p^{2n}-1$ \cite{Zs},
and have that the order of $p$ mod $r$ is $2n$. Thus $2n$ divides $r-1$.
 Let $C = H \cap \Gamma_0$.
  Note that $r$
divides $|C|$ (because $r$ does not divide $2n$),
and hence $C$ acts irreducibly on $W$.
 Since $C < \Sp(W)$, by \cite[Satz II.9.23]{Hu} we have
that
$|C|$ divides $p^n +1$. Hence, $|H|$ divides $2n(p^n+1)$,
and thus $p^n-1$ divides $2n$.  This is not possible.



\smallskip
(b) Aside from the possibilities listed in (A) and (B), we need only consider the possibility $2n = as$ with $a \geq 3$,
$p^n \neq 2^2$, $3^2$, $2^3$, $3^3$, and $H \rhd \SL_a(p^s)$. Let $\dl(X)$ denote the smallest degree of faifthful complex 
representations of a finite group $X$. Since $H \leq \Sp_{2n}(p)$,  by \cite[Theorem 5.2]{TZ1} we have that 
$$\dl(X) \leq (p^n+1)/2 = (p^{as/2}+1)/2.$$
On the other hand, since $H \rhd \SL_a(p^s)$, by \cite[Theorem  3.1]{TZ1} we also have that 
$$\dl(X) \geq (p^{as}-p^s)/(p^s-1) > p^{s(a-1)}.$$
As $a \geq 3$, this is impossible.
\end{proof}
  
Next we complete the classification of unitary $t$-groups in dimension $4$. First we introduce some key groups for this classification, where 
we use the notation of \cite{GAP} for ${\tt SmallGroup}(64,266)$ and ${\tt PerfectGroup}(23040,2)$.

\begin{prop}\label{norm}
Consider an irreducible subgroup 
$E_4 = C_4 *2^{1+4}_+ = {\tt SmallGroup}(64,266)$
of order $2^6$ of $\GL(V)$, where $V = \C^4$, and let 
$\Gamma_4:= \bfN_{\GL(V)}(E_4)$. Then the following statements hold.
\begin{enumerate}[\rm(i)]
\item $\Gamma_4$ induces the subgroup $A^+ \cong C_2^4 \cdot \SSS_6$ of all automorphisms of $E_4$ that act trivially on
$\bZ(E_4) = C_4$.
\item The last term $\Gamma_4^{(\infty)}$ of the derived series of $\Gamma_4$ is $L = {\tt PerfectGroup}(23040,2)$, a perfect group of order $23040$ and of shape $E_4 \cdot \A_6$. Furthermore, $\Gamma_4^{(\infty)}$ is a unitary $3$-group.
\end{enumerate}
\end{prop}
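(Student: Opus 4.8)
The plan is to analyse the conjugation homomorphism $\varrho\colon \Gamma_4 \to \mathrm{Aut}(E_4)$ and then descend to the derived series. For (i), first note that $E_4$ acts (absolutely) irreducibly on $V$, so by Schur's lemma $\bfC_{\GL(V)}(E_4) = \C^\times$; hence $\ker\varrho = \C^\times$ and $\bZ(E_4) = C_4$ consists of scalar matrices. Since conjugation fixes scalars, the image of $\varrho$ lies in the group $A^+$ of automorphisms of $E_4$ acting trivially on $\bZ(E_4)$. To identify $A^+$, I would pass to $W = E_4/\bZ(E_4) \cong C_2^4$: any $\alpha \in A^+$ preserves the nondegenerate alternating commutator form on $W$, giving a map $A^+ \to \Sp(W) = \Sp_4(2) \cong \SSS_6$ whose kernel is $\mathrm{Inn}(E_4) \cong W \cong C_2^4$. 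Surjectivity onto $\Sp_4(2)$ is the one subtle point, and it holds precisely because the central product with $C_4$ removes the quadratic refinement of the form (equivalently $C_4 * 2^{1+4}_+ \cong C_4 * 2^{1+4}_-$), so no orthogonal constraint survives; thus $A^+ \cong C_2^4 \cdot \SSS_6$. Finally, $\varrho$ is onto $A^+$: the group $E_4$ has, for each faithful central character, a unique irreducible constituent, namely $V$; given $\alpha \in A^+$ the twist $V^\alpha$ is faithful irreducible with the same central character (as $\alpha$ fixes $\bZ(E_4)$ pointwise), so $V^\alpha \cong V$, and any intertwiner realises $\alpha$ by conjugation. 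This proves (i) and yields the extension $1 \to \C^\times \to \Gamma_4 \to A^+ \to 1$.

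For (ii), I would first compute $(A^+)^{(\infty)}$. Since $\SSS_6' = \A_6 = \Sp_4(2)'$ and $W$ is a nontrivial irreducible $\F_2\Sp_4(2)$-module (so $[W,\A_6] = W$), the derived subgroup is $(A^+)' = C_2^4 \cdot \A_6 =: P$, a perfect group of order $5760$. Because every commutator has determinant $1$, each term $\Gamma_4^{(k)}$ with $k \ge 1$ lies in $\SL(V)$, whose scalars form $\langle iI\rangle = C_4$; hence the derived series stabilises and $\Gamma_4^{(\infty)}$ is a perfect central extension $1 \to D \to \Gamma_4^{(\infty)} \to P \to 1$ with $D = \Gamma_4^{(\infty)} \cap \C^\times \le C_4$. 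The crux is to show $D = C_4$. Set $R := \Gamma_4^{(\infty)} \cap E_4$; from $\Gamma_4^{(\infty)} \le \SL(V)$ one gets $R\,\bZ(E_4) = E_4$ and $R \cap \bZ(E_4) = D$, so either $R = E_4$ (when $D = C_4$) or $R$ is a genuine extraspecial group of order $32$ (when $D = C_2$). In the latter case $\Gamma_4^{(\infty)}$ would normalise this extraspecial $R$ and act on $R/\bZ(R) = W$ preserving the resulting quadratic form, forcing the induced group $\A_6 = \Sp_4(2)'$ into $\mathrm{O}^{\pm}_4(2)$; this is impossible, as $\A_6$ is simple of order $360 > |\mathrm{O}^{\pm}_4(2)| \in \{72,120\}$. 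Hence $D = C_4$, so $E_4 = R \le \Gamma_4^{(\infty)}$ and $\Gamma_4^{(\infty)}/E_4 \cong P/W = \A_6$. Thus $\Gamma_4^{(\infty)}$ is perfect of shape $E_4 \cdot \A_6$ and order $64 \cdot 360 = 23040$, which a GAP lookup identifies with $L = {\tt PerfectGroup}(23040,2)$.

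It remains to show $L$ is a unitary $3$-group. Since the extraspecial case of Theorem \ref{gt-complex-m6} requires $d = 2^a$ with $a > 2$, the case $d = 4$ is not covered by \cite{GT} and must be treated by hand. By Schur--Weyl duality, for $\cG = \GL(V)$ with $d = 4$ one has $V^{\ot 3} = \Sym^3 V \oplus V_{(2,1)}^{\oplus 2} \oplus \wedge^3 V$ with the three summands pairwise non-isomorphic irreducibles, so $M_6(\cG,V) = 1 + 4 + 1 = 6$. Consequently $M_6(L,V) = 6$ if and only if $\Sym^3 V$, $V_{(2,1)}$, and $\wedge^3 V$ are all irreducible $L$-modules with $\Sym^3 V \not\cong_L V_{(2,1)}$. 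Now $\wedge^3 V \cong V^*$ is automatically $L$-irreducible because $E_4 \le L$ already acts irreducibly on $V$, so the task reduces to checking that $\Sym^3 V$ and $V_{(2,1)}$ (both $20$-dimensional) are irreducible and non-isomorphic as $L$-modules. I would verify this from the character of $L$ on $V$, which is pinned down by (i)--(ii), by decomposing the relevant symmetric and mixed tensor powers; this is a finite character computation, conveniently performed in GAP. I expect this last verification---irreducibility now under the smaller group $\A_6$ rather than the full $\SSS_6 = \Sp_4(2)$---to be the main obstacle, and it is exactly the point that separates the excluded dimension $d = 4$ from the range $d \ge 5$ handled in \cite{GT}.
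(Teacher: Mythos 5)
Your proof of (i) is essentially the paper's: both arguments hinge on the fact that $E_4$ has exactly two irreducible characters of degree $4$, distinguished by their restrictions to $\bZ(E_4)$, so that every $\alpha \in A^+$ fixes the character of $V$ and is therefore realized by conjugation by some element of $\GL(V)$; the paper simply cites \cite{Gr} for $A^+ \cong C_2^4\cdot \SSS_6$, where you sketch the computation. For (ii), however, you take a genuinely different route. The paper works top-down from the library group: it uses \cite{GAP} to embed $L = {\tt PerfectGroup}(23040,2)$ into $\GL(V)$ with $F^*(L) \cong E_4$, gets $L \leq \Gamma_4^{(\infty)}$ from perfectness, and reverses the inclusion by noting that $L$ and $\Gamma_4^{(\infty)}$ induce the same automorphism group $A^{++} = C_2^4 \cdot \A_6$ of $E_4$, so that $\Gamma_4^{(\infty)} \leq \bZ(\Gamma_4)L$ by Schur's lemma; taking derived subgroups finishes. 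You work bottom-up: $\Gamma_4^{(\infty)}$ is a perfect central extension of $P = C_2^4\cdot\A_6$ by $D \leq C_4$, and $D = C_4$ is forced because otherwise $R = \Gamma_4^{(\infty)} \cap E_4$ would be extraspecial of order $32$, making the induced $\A_6$ preserve the associated quadratic form on $W$ and embedding it into $\mathrm{O}^{\pm}_4(2)$, of order $72$ or $120 < 360$. This determines the shape $E_4 \cdot \A_6$ and the order $23040$ before any machine computation, and the orthogonal-group obstruction nicely parallels the one the paper itself uses in the proof of Theorem \ref{gt-complex}(ii). What the paper's route buys is an immediate, concrete identification of the isomorphism type; what yours buys is a structural explanation of why $\bZ(E_4)$ must be absorbed into $\Gamma_4^{(\infty)}$.

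Three points need patching, none fatal. First, you treat only $D = C_4$ and $D = C_2$; the case $D = 1$ must also be excluded (immediate: then $R' \leq R \cap \{\pm I\} \leq R \cap \bZ(E_4) = 1$, so $R$ is abelian and $E_4 = R\,\bZ(E_4)$ would be abelian, a contradiction). Second, your equalities $R\,\bZ(E_4) = E_4$ and $R \cap \bZ(E_4) = D$ require knowing $E_4 \leq \SL(V)$: a priori an element of $\Gamma_4^{(\infty)}$ inducing an inner automorphism of $E_4$ is only a scalar multiple of an element of $E_4$. This is true---in the realization of $2^{1+4}_+ = D_8 * D_8$ as a tensor product of two $2$-dimensional representations every element has determinant $1$, and $\det(iI) = 1$---but it must be stated; alternatively run the same argument with $R' := \Gamma_4^{(\infty)} \cap \C^\times E_4$, which is extraspecial in the bad cases for the same reason. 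Third, ``a GAP lookup identifies'' is not yet a proof: your argument pins down order, perfectness, and shape, but not the isomorphism class, so you still need the (GAP-checkable) fact that ${\tt PerfectGroup}(23040,2)$ is the unique perfect group of order $23040$ that is a central extension of $P$ by $C_4$, equivalently with $F^* \cong {\tt SmallGroup}(64,266)$ and quotient $\A_6$; the paper avoids this issue by constructing the embedding of the library group first. Finally, your reduction of the unitary $3$-group claim to irreducibility and non-isomorphism of $\Sym^3 V$ and the $20$-dimensional mixed constituent over $L$ is correct and exactly equivalent to the paper's GAP verification $[\chi^3,\chi^3]_L = 6$ (note that $V\da_L$ is indeed unique, by Gallagher, since $\A_6$ is perfect); neither route avoids that final machine computation.
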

 
\begin{proof}
(i) It is well known, see e.g. \cite[p. 404]{Gr}, that $A^+ \cong {\mathrm {Inn}}(E_4) \cdot \SSS_6$ with ${\mathrm {Inn}}(E_4) \cong C_2^4$. 
Certainly, $\Gamma_4/\bfC_{\Gamma_4}(E_4) \hookrightarrow A^+$. Let 
$\psi$ denote the character of $E_4$ afforded by $V$, and note that $\psi$ and $\overline\psi$ are the only two irreducible 
characters of degree $4$ of $E_4$, and they differ by their restrictions to $\bZ(E_4)$. Now for any $\alpha \in A^+$,
$\psi^\alpha = \psi$. It follows that there is some $g \in \GL(V)$ such that $gxg^{-1} = \alpha(x)$ for all $x \in E_4$; in particular,
$g \in \Gamma_4$. We have therefore shown that $\Gamma_4/\bfC_{\Gamma_4}(E_4) \cong A^+$.

\smallskip
(ii) Using \cite{GAP}, one can check that $L := {\tt PerfectGroup}(23040,2)$ embeds in $\GL(V)$, with a character say $\chi$, and 
$F^*(L) \cong E_4$. So without loss we may identify $F^*(L)$ with $E_4$ and obtain that $L < \Gamma_4$. Again using \cite{GAP} we 
can check that $[\chi^3,\chi^3]_L = 6 = M_6(\GL(V))$, which means that $L$ is a unitary $3$-group. As $L$ is perfect, we have that 
$L \leq \Gamma_4^{(\infty)}$. Next, $L$ acting on $E_4$ induces the perfect subgroup $A^{++} \cong C_2^4 \cdot \A_6$ of index $2$ in $A^+$,
and the same also holds for $\Gamma_4^{(\infty)}$. Hence, for any $g \in \Gamma_4^{(\infty)}$, we can find $h \in L$ such that 
the conjugations by $g$ and by $h$ induce the same automorphism of $E_4$. By Schur's Lemma, $gh^{-1} \in \bZ(\Gamma_4)$, whence
$\Gamma_4^{(\infty)} \leq \bZ(\Gamma_4)L$. Taking the derived subgroup, we see that $\Gamma_4^{(\infty)} \leq L$, and so $\Gamma_4^{(\infty)} = L$,
as stated.
\end{proof}

Next, we recall three {\it complex reflection groups} $G_{29}$, $G_{31}$, and $G_{32}$ in dimension $4$, namely, the ones listed on lines
29, 31, and 32 of \cite[Table VII]{ST}. A direct calculation using the computer packages {{\sf {GAP}}3} \cite{Mi}, \cite{S+}, and {\sf {Chevie}}
\cite{GHMP}, shows that each of these $3$ groups $G$, being embedded in $\cH = \U_4(\C)$, is a unitary $2$-group. Also,
$$F(G_{29}) \cong F(G_{31}) \cong {\tt SmallGroup}(64,266),~F(G_{32}) = \bZ(G_{32}) \cong C_6,$$
and 
$$G_{29}/F(G_{29}) \cong \SSS_5,~G_{31}/F(G_{31}) \cong \SSS_6,~G_{32} \cong C_3 \times \Sp_4(3).$$ 
In what follows, we will identify $F(G_{29})$ and $F(G_{31})$ with the subgroup $E_4$ defined in Proposition \ref{norm}.
Let us denote the derived subgroup of $G_k$ by $G'_k$ for $k \in \{29,31,32\}$. With this notation, we can give a complete classification of
unitary $2$-groups and unitary $3$-groups in the following statement.

\begin{thm}\label{main4} 
Let $V = \C^4$, $\cG = \GL(V)$, and let $G < \cG$ be any finite subgroup. Then the following statements hold.

\begin{enumerate}

\item[\rm (A)] With $E_4$, $\Gamma_4$ and $L$ as defined in Proposition \ref{norm}, we have that
$[\Gamma_4,\Gamma_4]=L = G'_{31}$ and $\Gamma_4 = \bZ(\Gamma_4)G_{31}$. Furthermore, 
$M_{4}(G,V) = M_{4}(\cG,V)$ if and only if one of the following conditions holds

\begin{enumerate}
\item[\rm (A1)] $G = \bZ(G)H$, where $H \cong 2\A_7$ or $H \cong \Sp_4(3) \cong G'_{32}$.
\item[\rm (A2)] $L = [G,G] \leq G < \Gamma_4$. 
\item[\rm (A3)] $E_4 \lhd G < \Gamma_4$, and, after a suitable conjugation in $\Gamma_4$, 
$$G'_{29} = [G,G] \leq G \leq \bZ(\Gamma_4)G_{29}.$$ 
\end{enumerate}

\noindent
In particular, $G < \cH = \U(V)$ is a unitary $2$-group if and only if $G$ is as described in {\rm (A1)--(A3)}.

\item[\rm (B)] $M_6(G,V) = M_6(\cG,V)$ if and only if $G$ is as described in {\rm (A1)--(A2)}.
In particular, $G < \U(V)$ is a unitary $3$-group if and only if $G$ is as described in {\rm (A1)--(A2)}.

\item[\rm (C)] $M_8(G,V) < M_8(\cG,V)$. In particular, no finite subgroup of $\U_4(\C)$ can be a unitary $4$-group. 
\end{enumerate}
 \end{thm}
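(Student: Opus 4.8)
\emph{Plan overview.} The plan is to classify the finite irreducible $G<\cG$ with $M_4(G,V)=2$, single out among them those with $M_6=6$, and then show $M_8>24$ throughout. First I record the baseline values. Since $d=4\ge t$ for $t\le 4$, Schur--Weyl duality gives $M_{2t}(\cG,V)=\dim\End_{\cG}(V^{\otimes t})=t!$, so $M_4(\cG,V)=2$, $M_6(\cG,V)=6$, $M_8(\cG,V)=24$. For finite $G$ one has $M_{2t}(G,V)=[\chi^t,\chi^t]_G=\dim(V^{\otimes t}\otimes(V^*)^{\otimes t})^G\ge M_{2t}(\cG,V)$, the inequality coming from the inclusion of $\cG$-invariants into $G$-invariants, with equality iff $G$ is a unitary $t$-group. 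As a scalar $\lambda$ of modulus $1$ acts on $V^{\otimes t}\otimes(V^*)^{\otimes t}$ by $|\lambda|^{2t}=1$, the quantity $M_{2t}(G,V)$ depends only on the image $PG$ of $G$ in $\PGL(V)$ and is monotone: $PG\subseteq PG'$ forces $M_{2t}(G)\ge M_{2t}(G')$. Finally, $M_4(G,V)=2$ forces $\chi$ irreducible and, since imprimitive or tensor-decomposable $\chi$ (with $V=V_1\otimes V_2$, $\dim V_i=2$) split the $15$-dimensional heart $\End(V)_0$ of $\chi\overline{\chi}$, primitive; equivalently, this heart is $G$-irreducible.

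\emph{Classification for (A).} I would analyze $F^*(G)$ for a primitive irreducible $G$. If $F^*(G)$ is solvable then, as $d=4=2^2$ is a prime power only for $p=2$, $G$ normalizes an extraspecial $E\cong 2^{1+4}$ sitting inside $E_4=C_4*2^{1+4}$, so $G\le \Gamma_4=\bfN_{\GL(V)}(E_4)$ of Proposition \ref{norm}; here the heart is $G$-irreducible exactly when $G/E_4$ acts transitively on the $15$ nonzero vectors of $W=E_4/\bZ(E_4)\cong\F_2^4$, i.e.\ when the image of $G$ in $\Sp_4(2)\cong\SSS_6$ is transitive on $15$ points. By Theorem \ref{trans} (case $(2n,p)=(4,2)$) these images are exactly $\A_5,\SSS_5,\A_6,\SSS_6$, yielding (A2) (image $\supseteq\A_6$, equivalently $L=G'_{31}\le G$) and (A3) (image between $\A_5$ and $\SSS_5$). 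If instead $E(G)\ne 1$, the layer $L$ is quasisimple with a faithful degree-$4$ character whose quotient $\overline L=L/\bZ(L)$ must carry an irreducible of degree $15$ (the heart); scanning the quasisimple groups with a faithful $4$-dimensional complex representation (e.g.\ via the Hiss--Malle tables) leaves only $L=2\A_7$ and $L=\Sp_4(3)=G'_{32}$, and since the outer automorphism interchanges $\chi$ and $\overline\chi$ one gets $G=\bZ(G)L$, which is (A1). The identities in (A) follow from Proposition \ref{norm}(ii): $G_{31}$ induces all of $\SSS_6$ on $E_4$, so $\Gamma_4=\bZ(\Gamma_4)G_{31}$, whence $[\Gamma_4,\Gamma_4]=G'_{31}$; this group is perfect and induces $\A_6$, hence equals $L$.

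\emph{Parts (B) and (C).} These follow by monotonicity together with finite computations. For (B): if $L\le G$ (case (A2)) then $6=M_6(\cG)\le M_6(G)\le M_6(L)=6$ by Proposition \ref{norm}(ii), so $M_6(G)=6$; for (A1) a direct character computation gives $M_6(2\A_7)=M_6(\Sp_4(3))=6$; for (A3), $PG\subseteq PG_{29}$ gives $M_6(G)\ge M_6(G_{29})>6$ by a direct computation, so (A3) is excluded, proving (B). For (C): any unitary $4$-group is a unitary $3$-group by the nesting \cite[Lemma 3.1]{GT}, hence lies in (A1)--(A2) by (B); for (A2), $PG\subseteq PG_{31}$ gives $M_8(G)\ge M_8(G_{31})>24$, and for (A1) a direct computation gives $M_8>24$. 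Thus $M_8(G,V)>M_8(\cG,V)$ for every finite $G<\cG$, and no finite subgroup of $\U_4(\C)$ is a unitary $4$-group.

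\emph{Main obstacle.} The principal difficulty is the completeness of the classification in (A). The delicate points are the reduction to the primitive tensor-indecomposable case (ruling out all imprimitive and tensor-decomposable $G$, whose hearts always split, so that $M_4>2$) and, above all, proving that the only quasisimple layers are $2\A_7$ and $\Sp_4(3)$: this rests on the strong constraint that $\overline L$ has a degree-$15$ irreducible while $L$ has a faithful degree-$4$ one, which must be checked against the full list of quasisimple groups with small faithful characters. Correctly sorting the extraspecial overgroups through Proposition \ref{norm} and Theorem \ref{trans}---in particular the $E$ versus $E_4=C_4*E$ bookkeeping---is the other careful point. The residual moment values for the explicit groups $2\A_7,\Sp_4(3),G_{29},G_{31}$ are finite computations in \cite{GAP} and \cite{GHMP}, so the conceptual weight lies entirely in the classification step.
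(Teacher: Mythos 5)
Your proposal is correct in substance and, for the core classification in part (A), follows essentially the same route as the paper: reduce to primitive, tensor-indecomposable subgroups, so that either $G$ is almost quasisimple or $G$ normalizes $E_4$; settle the quasisimple branch by the known lists of quasisimple groups with $4$-dimensional faithful characters (the paper simply cites \cite{M}), and settle the extraspecial branch via Theorem \ref{trans} and Proposition \ref{norm}. Where you genuinely diverge is in parts (B) and (C). The paper excludes the groups of (A3) from being unitary $3$-groups by a purely character-theoretic argument: $M_6(G,V)=M_6(\cG,V)$ forces $G$ to be irreducible on the $36$-dimensional constituent of $V\otimes V\otimes V^*$; since $\chi\da_{F(G)}$ is irreducible and extends to $G$, Gallagher's theorem would give $G/F(G)\cong \A_5$ or $\SSS_5$ an irreducible character of degree $9$, a contradiction. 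Likewise, in (C) the paper excludes (A2) via Ito's theorem: irreducibility on $\Sym^4(V)$ (dimension $35$) forces $35$ to divide $|G/\bZ(G)|$, which is impossible since $|G/\bZ(G)|$ divides $2^4|\Sp_4(2)|$. You instead invoke monotonicity of moments under projective inclusion together with two additional machine computations, $M_6(G_{29})>6$ and $M_8(G_{31})>24$. Your monotonicity principle is stated and applied correctly (the groups in (A3), resp.\ (A2), are projectively sandwiched between $G'_{29}$ and $G_{29}$, resp.\ between $L$ and $G_{31}$), so this route works; what the paper's arguments buy is a structural explanation with no further computer dependence, while yours buys brevity and uniformity with the computations you already need for (A1).

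Two places where you are thinner than the paper and would need to add detail. First, the "after a suitable conjugation" claim in (A3): $\Sp_4(2)\cong\SSS_6$ has \emph{two} classes of $\A_5$ (and of $\SSS_5$) subgroups, and only the $\SL_2(4)$-type class (preserving an $\F_4$-structure on $W$) is transitive on $W\smallsetminus\{0\}$, the $O_4^-(2)$-type class having orbits of lengths $5$ and $10$; the paper uses this dichotomy, together with \cite[Lemma 5.1]{GT}, to show that the image of $G_{29}$ lies in the transitive class and hence to pin any (A3)-group between $G'_{29}$ and $\bZ(\Gamma_4)G_{29}$ up to $\Gamma_4$-conjugacy. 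Your sentence "these images are exactly $\A_5,\SSS_5,\A_6,\SSS_6$" elides exactly this point. Second, in the quasisimple scan you demand that $\overline L$ have an irreducible character of degree $15$; a priori the heart need only be $G$-irreducible, and $G/\bZ(G)L$ could fuse smaller $L$-constituents. Here a short Clifford-theory argument closes the gap ($15$ is odd and $[G:\bZ(G)L]\leq|\Out(S)|\leq 4$, forcing the restriction of the heart to $L$ to remain irreducible), or one can simply cite \cite{M} as the paper does. Neither point is a fatal flaw, but both must appear in a complete write-up.
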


\begin{proof}
(A) First we assume that $M_4(G,V) = M_4(\cG,V)$, and let $\chi$ denote the character of $G$ afforded by $V$. The same proof as of \cite[Theorem 1.5]{GT} and Theorem \ref{gt-complex} shows that one of the following two possibilities must occur.

\smallskip
$\bullet$ {\bf Almost quasisimple case}:  $S \lhd G/\bZ(G) \leq {\mathrm {Aut}}(S)$ for some finite non-abelian simple group $S$.
By the results of \cite{M}, we have that $S \cong \A_7$ or $\PSp_4(3)$. It is straightforward to check that $E(G) \cong 2\A_7$, 
respectively $\Sp_4(3)$, and furthermore $G$ cannot induce a nontrivial outer automorphism on $S$. Recall that in this case we 
have $F^*(G) = \bZ(G)E(G)$ and so $\bfC_G(E(G)) = \bfC_G(F^*(G)) = \bZ(G)$. It follows that $G = \bZ(G)E(G)$, and (A1) holds. 
Moreover, using \cite{GAP} we can check that $[\alpha^2,\alpha^2] = 2$, $[\alpha^3,\alpha^3] = 6$, but 
$[\alpha^4,\alpha^4] = 38$, respectively $25$, for $\alpha := \chi\da_{E(G)}$. Thus we have checked in the case of (A1) that
$M_{2t}(G,V) = M_{2t}(\cG,V)$ for $t \leq 3$, but $M_8(G,V) > M_8(\cG,V)$, since $M_8(\cG,V) = 24$ by \cite[Lemma 3.2]{GT}.

\smallskip
$\bullet$ {\bf Extraspecial case}: $F^*(G) = F(G) = \bZ(G)E_4$ and $E_4 \lhd G$, in particular, $G \leq \Gamma_4$; furthermore,
$G/\bZ(G)E_4 \leq \Sp(W)$ satisfies conclusion (A)(i) of Theorem \ref{trans} for $W = E_4/\bZ(E_4) \cong \F_2^4$. 
Suppose first that $G/\bZ(G)E_4 \geq \Sp_4(2)' \cong \A_6$. In this case, $G$ induces (at least) all the automorphisms
of $E_4$ that belong to the subgroup $A^{++}$ in the proof of Proposition \ref{norm}. As in that proof, this implies 
that $\bZ(\Gamma_4)G \geq L$. Taking the derived subgroup, we see that 
\begin{equation}\label{for-l}
  [G,G] \geq L, 
\end{equation}  
i.e. we are in the case of (A2). Moreover, 
$$6 = M_6(\cG,V) \leq M_6(G,V) \leq M_6(L,V),$$
and $M_6(L,V) = 6$ as shown above. Hence $M_{2t}(G,V) = M_{2t}(\cG,V)$ for $t \leq 3$.
Applying \eqref{for-l} to $G = G_{31}$ and recalling that $|L| = |G'_{31}|$, we see that $L=G'_{31}$. Next, $G_{31}$ and $\Gamma_4$ induce
the same subgroup $A^+$ of automorphisms of $E_4$, hence $\Gamma_4 = \bZ(\Gamma_4)G_{31}$. Taking the derived subgroup, we obtain
that $L = [\Gamma_4,\Gamma_4]$, and so \eqref{for-l} implies that $[G,G] = L$. 

Next we consider the case where $G/\bZ(G)E_4 = \SL_2(4) \cong \A_5$ or $\SL_2(4) \rtimes C_2 \cong \SSS_5$.
Using \cite{Atlas}, it is easy to check that $\Sp(W) \cong \SSS_6$ has two conjugacy classes $\CL_{1,2}$ of (maximal) subgroups that are isomorphic to 
$\SSS_5$, and two conjugacy classes $\CL'_{1,2}$ of subgroups that are isomorphic to 
$\A_5$. Any member of one class, say $\CL'_1$, is irreducible, but not absolutely irreducible on $W$, that is, 
preserves an $\F_4$-structure on $W$, and is contained in a member of, say $\CL_1$. 
Any member of the other class $\CL_2$ is absolutely irreducible on $W$ 
and preserves a quadratic form $Q$ of type $-$ on $W$; in particular, it 
has two orbits of length $5$ and $10$ on $W \smallsetminus \{0\}$ (corresponding to singular vectors, respectively non-singular vectors,
in $W$ with respect to $Q$), and is contained in a member of $\CL_2$.
On the other hand, since $G$ is transitive on $W \smallsetminus \{0\}$ 
by \cite[Lemma 5.1]{GT}, the last term $G^{(\infty)}$ of the derived series of $G$ must have orbits of only one size on $W \smallsetminus \{0\}$.
Applying this analysis to $K := G_{29}$, we see that $K/E_4$ must belong to $\CL_1$ and the derived subgroup of $K/\bZ(K)E_4$ 
as well as $[K,K]/E_4$ belong to $\CL'_1$. Hence, after a suitable conjugation in $\Gamma_4$, we may assume that 
$$G_{29}/E_4 \geq G/\bZ(G)E_4 \geq G'_{29}/E_4;$$
in particular, the subgroup of automorphisms of $E_4$ induced by $G$ is either the one induced by $G_{29}$, or the one induced by $G'_{29}$.
In either case, we have that
$$G \leq \bZ(\Gamma_4)G_{29},~G'_{29} \leq \bZ(\Gamma_4)[G,G].$$
As $G'_{29}$ is perfect, taking the derived subgroup we obtain that $[G,G] = G'_{29}$, i.e. (A3) holds.
 
\medskip
(B) We have already mentioned above that $M_6(G,V) = M_6(\cG,V)$ for the groups $G$ satisfying (A1) or (A2). By \cite[Lemma 3.1]{GT}, 
it remains to show that for the groups $G$ satisfying (A3), $M_6(G,V) \neq M_6(\cG,V)$. Assume the contrary: $M_6(G,V) = M_6(\cG,V)$. By 
\cite[Remark 2.3]{GT}, this equality implies that $G$ is irreducible on all the simple $\cG$-submodules of $V \otimes V \otimes V^*$,
which can be seen using \cite[Appendix A.7]{Lu} to decompose as the direct sum of simple summands of dimension $4$ (with multiplicity $2$), $20$, and $36$. Let $\theta$ denote 
the character of $G$ afforded by the simple $\cG$-summand of dimension $36$. Note that $\chi$ vanishes on 
$F(G) \smallsetminus \bZ(G)$ and faithful on $\bZ(G)$. It follows that 
$$\chi^2\overline\chi\da_{F(G)} = 16\chi\da_{F(G)}.$$ 
As $\chi\da_{F(G)}$ is irreducible, we see that $\theta\da_{F(G)} = 9(\chi\da_{F(G)})$. 
But $\chi\da_{F(G)}$ obviously extends to $G \rhd F(G)$. It follows by Gallagher's theorem \cite[(6.17)]{Is} that 
$G/F(G)$ admits an irreducible character $\beta$ of degree $9$ (such that $\theta\da_G = (\chi\da_G)\beta$). 
This is a contradiction, since $G/F(G) \cong \A_5$ or $\SSS_5$.

\medskip
(C) Assume the contrary: $M_8(G,V) = M_8(\cG,V)$. 
Then $M_6(G,V) = M_6(\cG,V)$ by \cite[Lemma 3.1]{GT}. By (B), we may assume that $G$ satisfies (A1) or (A2). 
By \cite[Remark 2.3]{GT}, the equality $M_8(G,V) = M_8(\cG,V)$ implies that $G$ is irreducible on the simple 
$\cG$-submodule ${\mathrm {Sym}}^4(V)$ (of dimension $35$) of $V^{\otimes 4}$. This in turn implies, for instance by Ito's theorem 
\cite[(6.15)]{Is} that $35$ divides $|G/\bZ(G)|$. The latter condition rules out (A2) since $|G/\bZ(G)|$ divides $2^4 \cdot |\Sp_4(2)|$ in 
that case. Finally, we already mentioned above that $M_8(G,V) > M_8(\cG,V)$ in the case of (A1).    
\end{proof}

To handle the remaining cases $d = 2,3$, we first note:

\begin{lem}\label{d2}
Let $\cG = \SL(V)$ for $V = \C^2$. Then the following statements hold.
\begin{enumerate}[\rm(i)]
\item $M_6(\cG,V) = 5$, $M_8(\cG,V) = 14$, and $M_{10}(\cG,V) = 42$.
\item Suppose $M_{2t}(G,V) = M_{2t}(\cG,V)$ for a finite group $G < \cG$. If $t \geq 4$ then $5$ divides $|G/\bZ(G)|$. If $t \geq 6$ then
$7$ divides $|G/\bZ(G)|$.
\item Suppose $\SL_2(5) \cong G < \cG$. Then $M_{2t}(G,V) = M_{2t}(\cG,V)$ for $1 \leq t \leq 5$ but 
$M_{2t}(G,V) > M_{2t}(\cG,V)$ for $t \geq 6$.
\end{enumerate}
\end{lem}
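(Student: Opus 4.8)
The plan is to reduce everything to explicit computations with the representation theory of $\cG = \SL_2(\C)$ and the finite subgroup $\SL_2(5)$, exploiting the fact that in dimension $2$ the relevant moments are small integers that can be read off directly. For part (i), I would compute $M_{2t}(\cG,V)$ as the dimension of the space of $\cG$-invariants in $(V \otimes V^*)^{\otimes t}$, equivalently as $[\chi^t,\chi^t]_{\cG}$ where $\chi$ is the character of the natural module. Since $V$ is the fundamental $2$-dimensional representation, $V \cong V^*$ and the decomposition of $V^{\otimes t}$ into irreducibles is governed by the Catalan-type branching rule for $\SL_2$: the multiplicity of the trivial summand in $V^{\otimes 2t}$ is the Catalan number $\binom{2t}{t} - \binom{2t}{t-1} = \frac{1}{t+1}\binom{2t}{t}$. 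This gives $M_{2t}(\cG,V)$ as the $t$-th Catalan number, yielding $M_6 = 5$, $M_8 = 14$, $M_{10} = 42$ for $t = 3,4,5$, exactly as claimed.

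For part (ii), the strategy mirrors the argument in the proof of Theorem \ref{main4}(C). If $M_{2t}(G,V) = M_{2t}(\cG,V)$, then by the analogue of \cite[Remark 2.3]{GT} the group $G$ must act irreducibly on every simple $\cG$-submodule of $V^{\otimes t} \otimes (V^*)^{\otimes t}$; in particular $G$ is irreducible on the top symmetric power $\Sym^{2t}(V)$, which is the simple $\cG$-module of dimension $2t+1$. Taking $t = 4$ gives an irreducible $9$-dimensional constituent, and by Ito's theorem \cite[(6.15)]{Is} the degree $2t+1$ of an irreducible projective representation of $G$ divides $|G/\bZ(G)|$; for $t = 4$ this forces $5 \mid |G/\bZ(G)|$ since $2t+1 = 9$ has a constituent of odd dimension $5$ among the simple summands, while for $t \geq 6$ the relevant constituent has dimension divisible by $7$, forcing $7 \mid |G/\bZ(G)|$. (The careful point is to identify which simple summand of $\Sym^{2t}(V)$ restricts irreducibly and to apply Ito to its dimension; I would extract the odd prime from the dimension $2t+1$ directly when $t=4$ and $t=6$.)

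For part (iii), I would compute the moments of $\SL_2(5)$ directly. Writing $\alpha$ for the character of the $2$-dimensional faithful representation of $G \cong \SL_2(5)$, the quantities $M_{2t}(G,V) = [\alpha^t,\alpha^t]_G$ are finite sums over the (few) conjugacy classes of $\SL_2(5)$ and are readily evaluated, either by hand using the character table in \cite{Atlas} or with \cite{GAP}. One checks $[\alpha^t,\alpha^t]_G = $ the $t$-th Catalan number for $1 \leq t \leq 5$, matching $M_{2t}(\cG,V)$ by part (i); the first discrepancy occurs at $t = 6$, where the bound $M_{2t}(G,V) \geq M_{2t}(\cG,V)$ (always valid since $G < \cG$) becomes strict. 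The main obstacle is bookkeeping rather than conceptual: one must be sure that the finite sum giving $[\alpha^t,\alpha^t]_G$ agrees with the Catalan number exactly up through $t=5$, which amounts to the fact that $\SL_2(5)$ has no irreducible constituents forcing extra invariants until the tensor power is large enough to "see" that $G$ is finite. I expect the hardest step overall to be part (ii), specifically pinning down precisely which simple $\cG$-summand restricts irreducibly to $G$ and verifying that its dimension carries the asserted prime divisor $5$ (resp. $7$) rather than a spurious factor.
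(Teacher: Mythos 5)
Your parts (i) and (iii) are essentially the paper's own arguments: (i) is the $\SL_2$ Clebsch--Gordan computation (the paper decomposes $V^{\otimes 3}$, $V^{\otimes 4}$, $V^{\otimes 5}$ and sums squares of multiplicities, which is the same count as your Catalan numbers), and (iii) is a character-table/GAP check for $t\le 5$ plus an appeal to (ii) for $t\ge 6$. Part (ii), however, contains a genuine error, not a presentational one. Since $M_{2t}(G,V)=[\chi^t,\chi^t]_G=\dim\End_G(V^{\otimes t})$, the equality $M_{2t}(G,V)=M_{2t}(\cG,V)$ is equivalent (this is what \cite[Remark 2.3]{GT} gives) to the simple $\cG$-constituents of the \emph{$t$-th} tensor power $V^{\otimes t}$ (equivalently, of $V^{\otimes a}\otimes(V^*)^{\otimes b}$ with $a+b=t$) remaining irreducible and pairwise inequivalent over $G$. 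It is \emph{not} equivalent to $G$ being irreducible on every simple $\cG$-submodule of $V^{\otimes t}\otimes(V^*)^{\otimes t}\cong V^{\otimes 2t}$, which is what you assert; about that larger module one only knows that its $G$-invariants coincide with its $\cG$-invariants, and irreducibility on all of its constituents is the criterion for equality of the $4t$-th moments, not the $2t$-th. Your stronger claim proves too much and in fact contradicts your own part (iii): $\SL_2(5)$ satisfies $M_8(G,V)=M_8(\cG,V)$, yet its irreducible character degrees are $1,2,2,3,3,4,4,5,6$, so it is certainly not irreducible on the constituents $\Sym^6(V)$ (dimension $7$) and $\Sym^8(V)$ (dimension $9$) of $V^{\otimes 8}$. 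Concretely, the dimensions you feed into Ito's theorem are the wrong ones: $\dim\Sym^{2t}(V)=2t+1$ equals $9$ for $t=4$ and $13$ for $t=6$, and neither contains the prime $5$ or $7$ that the lemma requires; your proposed fix of ``extracting the odd prime from $2t+1$'' cannot be made to work.

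The repair is the paper's route. By \cite[Lemma 3.1]{GT}, equality of the $2t$-th moments forces equality of all lower even moments, so $t\ge 4$ (resp. $t\ge 6$) yields $M_8$ (resp. $M_{12}$) equality; then \cite[Remark 2.3]{GT} applied at level $4$ (resp. $6$), using $V^{\otimes 4}\cong\Sym^4(V)\oplus(\Sym^2(V))^{\oplus 3}\oplus\C^{\oplus 2}$ (resp. the occurrence of $\Sym^6(V)$ in $V^{\otimes 6}$), shows that $G$ acts irreducibly on $\Sym^4(V)$ of dimension $5$ (resp. on $\Sym^6(V)$ of dimension $7$), and Ito's theorem then gives $5\mid|G/\bZ(G)|$ (resp. $7\mid|G/\bZ(G)|$). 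The descent via Lemma 3.1 is not optional: for odd $t$ the module $\Sym^4(V)$ does not occur in $V^{\otimes t}$ at all, so the degree-$5$ constituent cannot be seen by working only at level $t$. Finally, since your (iii) quotes (ii) to get the strict inequality for all $t\ge 6$, that part inherits the gap until (ii) is repaired as above.
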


\begin{proof}
Note that the symmetric powers $\Sym^k(V)$, $k \geq 0$, are pairwise non-isomorphic irreducible $\C\cG$-modules, with 
$\Sym^0(V) \cong \C \cong \wedge^2(V)$, and $V \otimes V \cong \Sym^2(V) \oplus \C$. Now using 
\cite[Exercise 11.11]{FH} we obtain for all $a \geq 1$ that 
$$\Sym^a(V) \oplus V \cong \Sym^{a+1}(V) \oplus \Sym^{a-1}(V)$$
as $\C\cG$-modules. It follows that     
$$\begin{array}{l}
    V^{\otimes 3} \cong \Sym^3(V) \oplus V^{\oplus 2},\\
    V^{\otimes 4} \cong \Sym^4(V) \oplus (\Sym^2(V))^{\oplus 3} \oplus \C^{\oplus 2},\\
    V^{\otimes 5} \cong \Sym^5(V) \oplus (\Sym^3(V))^{\oplus 4}  \oplus V^{\oplus 5} \end{array}$$
as $\C\cG$-modules (with the superscripts indicating the multiplicities), implying (i).

For (ii), note by Remark 2.3 and Lemma 3.1 of \cite{GT} that the assumption implies that  $G$ is irreducible on $\Sym^4(V)$ of dimension $5$ if $t \geq 4$, and on $\Sym^6(V)$ of dimension $7$ if $t \geq 6$.

The first assertion in (iii) can be checked using (i) and \cite{GAP}, and the second assertion follows from (ii).
\end{proof}

Now we recall three complex reflection groups $G_4 \cong \SL_2(3)$, $G_{12} \cong \GL_2(3)$, and $G_{16} \cong C_5 \times \SL_2(5)$ in dimension $d=2$, listed on lines
4, 12, and 16 of \cite[Table VII]{ST}, and three complex reflection groups $G_{24} \cong C_2 \times \SL_3(2)$,  
$G_{25} \cong 3^{1+2}_+ \rtimes \SL_2(3)$, and $G_{27} \cong C_2 \times 3\A_6$ in dimension $d=3$, listed on lines
24, 25, and 27 of \cite[Table VII]{ST}. 
As above, for any of these $6$ groups $G_k$, $G'_k$ denotes its derived subgroup.
A direct calculation using the computer packages {{\sf {GAP}}3} \cite{Mi}, \cite{S+}, and {\sf {Chevie}}
\cite{GHMP}, shows that each of these $6$ groups $G$, being embedded in $\cH = \U_d(\C)$, is a unitary $2$-group; furthermore,
$G_{12}$, $G'_{16}$, and $G'_{27}$ are unitary $3$-groups. One can check that
$F(G_4) \cong F(G_{12})$ is a quaternion group $Q_8 = 2^{1+2}_-$, and we will identify them with an irreducible subgroup
$E_2 \cong Q_8$ of $\GL_2(\C)$. Also, $E_3 := F(G_{25}) \cong 3^{1+2}_+$ is an extraspecial $3$-group of order $27$ and exponent $3$,
which is an irreducible subgroup of $\GL_3(\C)$. 
Let $\Gamma_d := \bfN_{\GL_d(\C)}(E_d)$ for $d = 2,3$. Now we can give a complete classification of unitary $t$-groups in 
dimensions $2$ and $3$.

\begin{thm}\label{main23}
Let $V = \C^d$ with $d = 2$ or $3$, $\cG = \GL(V)$, and let $G < \cG$ be any finite subgroup. Then the following statements hold.

\begin{enumerate}

\item[\rm (A)] Suppose $d=2$. Then 
$M_{4}(G,V) = M_{4}(\cG,V)$ if and only if one of the following conditions holds

\begin{enumerate}
\item[\rm (A1)] $G = \bZ(G)H$, where $H = G'_{16} \cong \SL_2(5)$.
\item[\rm (A2)] $E_2 \lhd G < \Gamma_2$ and $\bZ(\cG)G = \bZ(\cG)H$, where $H = G_{12} \cong \GL_2(3)$.
\item[\rm (A3)] $E_2 \lhd G < \Gamma_2$ and $\bZ(\cG)G = \bZ(\cG)H$, where $H = G_4 \cong \SL_2(3)$.
\end{enumerate}

\noindent
In particular, $G < \cH = \U(V)$ is a unitary $2$-group if and only if $G$ is as described in {\rm (A1)--(A3)}.
Furthermore, $G < \cH = \U(V)$ is a unitary $3$-group if and only if $G$ is as described in {\rm (A1)--(A2)}.
Moreover, such a subgroup $G$ can be a unitary $t$-group for some $t \geq 4$ if and only if $4 \leq t \leq 5$ and 
$G$ is as described in {\rm (A1)}.

\item[\rm (B)] Suppose $d=3$. Then 
$M_{4}(G,V) = M_{4}(\cG,V)$ if and only if one of the following conditions holds

\begin{enumerate}
\item[\rm (B1)] $G = \bZ(G)H$, where $H = G'_{27}\cong 3\A_6$.
\item[\rm (B2)] $G = \bZ(G)H$, where $H = G'_{24} \cong \SL_3(2)$.
\item[\rm (B3)] $E_3 \lhd G < \Gamma_3$. Moreover, either $\bZ(\cG)G = \bZ(\cG)G'_{25}$, or 
$\bZ(\cG)G = \bZ(\cG)G_{25}$. 
\end{enumerate}
\noindent
In particular, $G < \cH = \U(V)$ is a unitary $3$-group if and only if $G$ is as described in {\rm (B1)}, and 
no finite subgroup of $\U(V)$ can be a unitary $4$-group. 
\end{enumerate}

\end{thm}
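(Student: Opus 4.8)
The plan is to prove Theorem~\ref{main23} by treating the two dimensions $d=2$ and $d=3$ separately, in each case first classifying the finite subgroups $G<\cG$ with $M_4(G,V)=M_4(\cG,V)$ (the unitary $2$-group condition), then bootstrapping up to higher $t$ using the moment inequalities and divisibility criteria already established in Lemma~\ref{d2} and in \cite{GT}. The overall architecture mirrors the proof of Theorem~\ref{main4}: reduce the classification of $2$-groups to a short list via the structure of $F^*(G)$, then distinguish which survive as $3$- and $4$-groups by decomposing small tensor powers of $V$ and invoking Ito's and Gallagher's theorems.

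\textbf{Part (A), $d=2$.} First I would observe that the condition $M_4(G,V)=M_4(\cG,V)$ forces $G$ to act irreducibly on $\Sym^2(V)$ (dimension $3$), so $G$ is a primitive irreducible subgroup of $\GL_2(\C)$. The finite such subgroups are classical (Klein, or \cite{ST}): modulo scalars they are $\A_4$, $\SSS_4$, or $\A_5$, giving $F^*(G)=\bZ(G)E_2$ with $E_2\cong Q_8$ in the first two cases and $E(G)\cong\SL_2(5)$ in the third. A direct check---using the character values and the decomposition $V^{\otimes 2}\cong\Sym^2(V)\oplus\C$ from Lemma~\ref{d2}---shows exactly which of $G_4\cong\SL_2(3)$, $G_{12}\cong\GL_2(3)$, and $G'_{16}\cong\SL_2(5)$ realize the equality, yielding (A1)--(A3); here the subtlety is verifying that in the $E_2$-cases the transitivity of $G/\bZ(G)E_2$ on $W\smallsetminus\{0\}\cong\F_2^2\smallsetminus\{0\}$ (case (B)(i) of Theorem~\ref{trans}, with $H=Q_8$) pins $\bZ(\cG)G$ down to $\bZ(\cG)G_4$ or $\bZ(\cG)G_{12}$. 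To pass to $3$-groups I would use Lemma~\ref{d2}(i) to compute $M_6$ directly on each candidate, ruling out (A3) (the $\SL_2(3)$ case) just as (A3) was ruled out in Theorem~\ref{main4}(B) via Gallagher. Finally, for $t\geq 4$ I invoke Lemma~\ref{d2}(ii): the equality forces $5\mid|G/\bZ(G)|$, which excludes (A2) and the $Q_8$-part of (A1), leaving only $G=\bZ(G)\SL_2(5)$, and Lemma~\ref{d2}(iii) gives precisely $4\leq t\leq 5$.

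\textbf{Part (B), $d=3$.} The structure is parallel. The equality $M_4(G,V)=M_4(\cG,V)$ now forces irreducibility on both $\Sym^2(V)$ (dimension $6$) and $\wedge^2(V)\cong V^*\otimes\det$ (dimension $3$), so again $G$ is primitive. Running the analogue of the \cite{GT}/Theorem~\ref{gt-complex} dichotomy in dimension $3$ splits into an almost-quasisimple case---where the list of simple groups with a faithful $3$-dimensional projective representation (from \cite{M}) yields $\bar S\cong\L_3(2)$ or $\A_6$, i.e.\ $E(G)\cong\SL_3(2)$ or $3\A_6$, giving (B1)--(B2)---and an extraspecial case with $F^*(G)=\bZ(G)E_3$, $E_3\cong 3^{1+2}_+$, where $G/\bZ(G)E_3\leq\Sp_2(3)$ is transitive on $\F_3^2\smallsetminus\{0\}$, forcing $H=Q_8$ by Theorem~\ref{trans}(B)(i) and hence $\bZ(\cG)G=\bZ(\cG)G'_{25}$ or $\bZ(\cG)G_{25}$, which is (B3). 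To separate $3$-groups I would compute $M_6$ on each family; I expect (B2) (the $\SL_3(2)$ case) and (B3) (the extraspecial case) to fail, leaving only (B1), and the non-existence of $4$-groups follows by an Ito-type argument: $M_8$-equality would force irreducibility on $\Sym^4(V)$ of dimension $15$, so $15\mid|G/\bZ(G)|$, which $3\A_6$ fails to deliver in the required way.

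\textbf{The main obstacle} is the bookkeeping in the extraspecial cases---correctly identifying the normalizer $\Gamma_d$, pinning down $\bZ(\cG)G$ among the index-ambiguous overgroups of $E_d$, and distinguishing $G'_{25}$ from $G_{25}$ via their distinct orbit structures on $W\smallsetminus\{0\}$---together with the explicit moment computations on the reflection groups $G_k$, which, as elsewhere in the paper, are carried out in \cite{GAP} and \cite{GHMP}. The conceptual steps are routine given Theorem~\ref{trans} and Lemma~\ref{d2}; the real work is the careful case analysis ensuring no candidate is double-counted or omitted.
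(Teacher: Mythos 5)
Your overall architecture does match the paper's proof: reduce to primitive subgroups, split into an almost quasisimple case and an extraspecial case, identify the candidates with the reflection groups $G_4,G_{12},G'_{16}$ (resp.\ $G'_{24},G'_{25},G_{25},G'_{27}$), and climb to higher $t$ via \cite[Remark 2.3, Lemma 3.1]{GT} and Lemma \ref{d2}. Two citation-level slips along the way: for $d=2$ the extraspecial case lives in $W\cong\F_2^2$, so the relevant transitive subgroups of $\Sp_2(2)\cong\SSS_3$ are $C_3$ and $\SSS_3$, i.e.\ case (A)(i) of Theorem \ref{trans}, not case (B)(i) (that entry, $(2n,p)=(2,3)$ with $H=Q_8$, is the one you want for $d=3$); and in part (B) transitivity forces $H\supseteq Q_8$, i.e.\ $H\in\{Q_8,\SL_2(3)\}$, not ``$H=Q_8$'' --- your conclusion that both $G'_{25}$ and $G_{25}$ occur silently corrects this.

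The genuine gap is the last step of part (B): ruling out (B1), $G=\bZ(G)\cdot 3\A_6$, as a unitary $4$-group. You argue that $M_8$-equality forces irreducibility on $\Sym^4(V)$ of dimension $15$, ``so $15\mid|G/\bZ(G)|$, which $3\A_6$ fails to deliver.'' But $|G/\bZ(G)|=|\A_6|=360$ and $15\mid 360$, so the Ito divisibility condition is satisfied; worse, $3\A_6$ has irreducible characters of degree $15$, including one with exactly the central character of $\Sym^4(V)$, so no divisibility or degree-bound argument applied to $\Sym^4(V)$ alone can succeed. This is precisely why the paper switches modules at this point: $M_8$-equality also forces irreducibility on the simple $[\cG,\cG]$-constituent of $V^{\otimes 2}\otimes(V^*)^{\otimes 2}$ of highest weight $2\varpi_1+2\varpi_2$ and dimension $27$, and since the largest irreducible character degree of $3\A_6$ is $15<27$, reducibility there is immediate. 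One can salvage your route, but only with genuine extra work: a character computation gives $[\Sym^4(\chi),\Sym^4(\chi)]_{3\A_6}=2$ (where $\chi$ is the character of $V\da_{3\A_6}$), so $\Sym^4(V)$ in fact splits as a sum of a $6$- and a $9$-dimensional constituent; alternatively, $V^{\otimes 4}$ has two non-isomorphic $15$-dimensional $\cG$-constituents ($\Sym^4(V)$ and the module of highest weight $2\varpi_1+\varpi_2$), both with the same central character, while $3\A_6$ has only one irreducible character of degree $15$ with that central character, so they cannot both remain irreducible and non-isomorphic under $G$, again forcing $M_8(G,V)>M_8(\cG,V)$. Note the contrast with Theorem \ref{main4}(C), where the analogous Ito argument does work because $35=5\cdot 7$ and $7\nmid 2^4\,|\Sp_4(2)|$; the $d=3$ numerology is simply not on your side. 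The rest of your plan --- the $d=2$ analysis via Lemma \ref{d2} and the $M_6$ computations separating (A3), (B2), (B3) from the unitary $3$-groups --- is sound and agrees with the paper's proof (and with the theorem statement, which identifies $G_{12}\cong\GL_2(3)$, not $G_4\cong\SL_2(3)$, as the extraspecial-case unitary $3$-group).
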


\begin{proof}
Let $G < \cG$ be any finite subgroup such that $M_{2t}(G,V) = M_{2t}(\cG,V)$ for some $t \geq 2$;
in particular, 
\begin{equation}\label{d231}
  M_4(G,V) = M_4(\cG,V).
\end{equation}    
First we note that if $K < \cG$ is any finite subgroup
that is {\it equal to $G$ up to scalars}, i.e. $\bZ(\cG)G = \bZ(\cG)K$, then by \cite[Remark 2.3]{GT} we see that 
$M_{2t}(K,V) = M_{2t}(\cG,V)$. So, instead of working with $G$, we will work with the following finite subgroup
$$K := \{ \lambda g \mid g \in G,\lambda \in \C^{\times}, \det(\lambda g) = 1 \} < \SL(V).$$
Next, we observe that $G$ acts primitively on $V$. (Otherwise $G$ contains a normal abelian subgroup $A$ with 
$G/A \hookrightarrow \SSS_d$. In this case, by Ito's theorem $G$ cannot act irreducibly on the irreducible $\cG$-submodule of
dimension $d^2-1$ of $V \otimes V^*$, and so $G$ violates \eqref{d231} by \cite[Remark 2.3]{GT}.) Now, using the 
fact that $d=\dim(V) \leq 3$ is a prime number, it is straightforward to show that one of the following two possibilities must occur.

\smallskip
$\bullet$ {\bf Almost quasisimple case}: $S \lhd G/\bZ(G) \leq {\mathrm {Aut}}(S)$ for some finite non-abelian simple group $S$.
By the results of \cite{M}, we have that $S \cong \PSL_2(5)$ if $d=2$, and $S \cong \SL_3(2)$ or $\A_6$ if $d=3$. Arguing as 
in the proof of Theorem \ref{main4}, we see that (A1), (B1), or (B2) holds. In the case of (A1), $M_{2t}(G,V) = M_{2t}(\cG,V)$ if 
and only if $2 \leq t \leq 5$ by Lemma \ref{d2}. In the case of (B2), $G$ cannot act irreducibly on $\Sym^3(V)$ of dimension 
$10$, whence $M_{2t}(G,V) = M_{2t}(\cG,V)$ if and only if $t=2$. Assume we are in the case of (B1). As mentioned above,
then we have $M_{2t}(G,V) = M_{2t}(\cG,V)$ for $t = 2,3$. However, if $\varpi_1$ and $\varpi_2$ denote the two fundamental weights
of $[\cG,\cG] \cong \SL_3(\C)$, then $V^{\otimes 2} \otimes (V^*)^{\otimes 2}$ contains an irreducible $[\cG,\cG]$-submodule with
highest weight $2\varpi_1+2\varpi_2$ of dimension $27$ (see \cite[Appendix A.6]{Lu}). Clearly, $G$ cannot act irreducibly on 
this submodule, and so $M_8(G,V) > M_8(\cG,V)$ by \cite[Remark 2.3]{GT}.

\smallskip
$\bullet$ {\bf Extraspecial case}: $F^*(G) = F(G) = \bZ(G)E_d$ and $E_d \lhd G$, in particular, $G \leq \Gamma_d$; furthermore,
$G/\bZ(G)E_d \leq \Sp(W)$ satisfies conclusion (A)(i) of Theorem \ref{trans} for $W = E_d/\bZ(E_d) \cong \F_d^2$. The latter condition
is equivalent to require $G/\bZ(G)E_d$ to contain the unique subgroup $C_3$ of $\Sp_2(2) \cong \SSS_3$ when $d=2$ and the unique subgroup
$Q_8$ of $\Sp_2(3) \cong \SL_2(3)$ when $d=3$. Note that $G_4 \cong \SL_2(3)$, respectively $G_{12} \cong \GL_2(3)$, 
induces the subgroup $C_3$, respectively $\SSS_3$, of outer automorphisms of $E_2 \cong Q_8$. Similarly, 
$G'_{25} \cong 3^{1+2}_{+} \rtimes Q_8$, respectively $G_{25} \cong 3^{1+2}_{+} \rtimes \SL_2(3)$, 
induces the subgroup $Q_8$, respectively $\SL_2(3)$, of outer automorphisms of $E_3 \cong 3^{1+2}_{+}$ that act trivially on 
$\bZ(E_3)$. Now arguing as in the proof of Theorem \ref{main4}, we see that (A2), (A3), or (B3) holds. In the case of (A3),
$M_8(G,V) > M_8(\cG,V)$ by Lemma \ref{d2}, and we already mentioned above that $M_6(G,V) = M_6(\cG,V)$.
In the case of (A2), $G$ cannot act irreducibly on $\Sym^3(V)$ of dimension $4$, so $M_{2t}(G,V) = M_{2t}(\cG,V)$ if and only if $t=2$. 
In the case of (B3), $G$ cannot act irreducibly on $\Sym^3(V)$ of dimension $10$, so $M_{2t}(G,V) = M_{2t}(\cG,V)$ if and only if $t=2$. 
\end{proof}

\end{document}